\newtheorem{teo}{Theorem}
\newtheorem{coro}[teo]{Corollary}
\newtheorem{prop}[teo]{Proposition}
\newtheorem{lema}[teo]{Lemma}
\newtheorem*{prob}{Problem}
\newcommand{\KX}{K\langle X \rangle}
\newcommand{\KXk}{K\langle x_1,\dots,x_k\rangle}
\begin{document}

\title{Minimal Varieties and Identities of Relatively Free Algebras}

\author{
Dimas Jos\'e Gon\c{c}alves\thanks{\texttt{dimas@dm.ufscar.br};
partially supported by CAPES and CNPq}
\\
\\
Departamento de Matem\'atica,\\
Universidade Federal de S\~ao Carlos,\\
13565-905 S\~ao Carlos, SP, Brazil
\\
\\
Thiago Castilho de Mello \thanks{\texttt{tcmello@unifesp.br}; partially supported by Fapesp grant No. 2012/16838-0}
\\
\\
Instituto de Ci\^encia e Tecnologia\\
Universidade Federal de S\~ao Paulo\\
12231-280 S\~ao Jos\'e dos Campos, SP, Brazil}

\date{}
\maketitle

\begin{abstract}
Let $K$ be a field of characteristic zero and let $\mathfrak{M}_5$ be the variety of associative algebras over $K$, defined by the identity
$[x_1,x_2][x_3,x_4,x_5]$. It is well-known that such variety is a minimal variety and that is generated by the algebra
\[A=\begin{pmatrix}
E_0 & E\\
0 & E\\
\end{pmatrix},\]
where $E=E_0\oplus E_1$ is the Grassmann algebra.
In this paper, for any positive integer $k$, we describe the polynomial identities of the relatively free algebras of rank $k$ of $\mathfrak{M}_5$,
\[F_k(\mathfrak{M}_5)=\dfrac{\KXk}{\KXk\cap T(\mathfrak{M}_5)}.\]
It turns out that such algebras satisfy the same polynomial identities of some algebras used in the description of the subvarieties of $\mathfrak{M}_5$,
given by Di Vincenzo, Drensky and Nardozza.
\end{abstract}

\section{Introduction}

Let $K$ be a field of characteristic 0, and let $X=\{x_1,x_2,\dots\}$ be an infinite countable set.
We denote by $\KX$ the free associative
unitary algebra freely
generated by $X$.
We say that an associative algebra $A$ is an \emph{algebra with
polynomial identity} (PI-algebra, for short) if there exists a nonzero polynomial $f=f(x_1,\dots,x_n)\in \KX$ such that $f(a_1,\dots, a_n)=0$, for any
elements $a_1$, \dots, $a_n\in A$. In this case, we say that $f$ is a polynomial identity of $A$, or simply that $A$ satisfies $f$.

If $A$ is a PI-algebra
then $T(A)=\{f\in \KX \,|\, f \text{ is an identity of } A\}$ is an ideal of $\KX$ which is invariant under any endomorphism of
the algebra $\KX$. An ideal with this property is called a \emph{T-ideal} or \emph{verbal ideal} of $\KX$. We refer the reader to
\cite{Drensky,GZ2005} for the basic theory of PI-algebras.

The T-ideals play a central role in the theory of PI-algebras, and they are often studied through its equivalent notion of varieties of algebras. If
$\mathcal{F}$ is a subset of $\KX$, the class of all algebras satisfying the identities from $\mathcal{F}$ is called \emph{the variety of (associative)
algebras defined by $\mathcal{F}$} and denoted by $var(\mathcal{F})$.
Given $\mathfrak{V}$ and $\mathfrak{W}$ varieties of algebras, we say that
$\mathfrak{W}$ is a subvariety of $\mathfrak{V}$ if $\mathfrak{W}\subseteq \mathfrak{V}$. If $\mathfrak{V}$ is a variety of algebras, we denote by
$T(\mathfrak{V})$ the set of
 identities satisfied by all algebras in $\mathfrak{V}$.
 Of course $T(\mathfrak{V})$ is a T-ideal of $\KX$; it is called \emph{the T-ideal of $\mathfrak{V}$}.
If $\mathfrak{V}=var(\mathcal{F})$, we say that the elements of $T(\mathfrak{V})$ are \emph{consequences of}
(or \emph{follow from}) the elements of
$\mathcal{F}$.
Note that $T(\mathfrak{V})$ is the smallest T-ideal that contains $\mathcal{F}$. It is the vector space generated by all elements
\[g_0f(g_1,\ldots,g_n)g_{n+1}\]
where $g_0,\ldots,g_{n+1} \in \KX$ and $f(x_1,\ldots,x_n) \in \mathcal{F}$. In this case we denote
\[T(\mathfrak{V})=\langle \mathcal{F} \rangle^T\]
and we say that the T-ideal $\langle \mathcal{F} \rangle^T$ is \emph{generated} by $\mathcal{F}$.

If $\mathfrak{V}$ is a variety and $T(\mathfrak{V})=T(A)$ for some associative algebra $A$, we say that $A$ generates the variety $\mathfrak{V}$. It can be
shown that $\mathfrak{V}$ is generated by the factor algebra
\[F(\mathfrak{V})=\dfrac{\KX}{T(\mathfrak{V})}\]
called the relatively free algebra of $\mathfrak{V}$.

If $\mathfrak{V}$ is a variety, the relatively free algebra of $\mathfrak{V}$ satisfies a universal property: given $A\in \mathfrak{V}$ and an arbitrary
function $\varphi_0:X\longrightarrow A$, there exists a unique homomorphism of algebras $\varphi:F(\mathfrak{V})\longrightarrow A$ extending $\varphi_0$.
One can also define the relatively free algebra of finite rank.
If $k$ is a positive integer, the relatively free algebra of $\mathfrak{V}$ of rank $k$ is
the factor algebra
\[F_k(\mathfrak{V})=\dfrac{\KXk}{T(\mathfrak{V})\cap \KXk},\] which also satisfies a similar universal property. Of course, for any $k$,
$F_k(\mathfrak{V})\in \mathfrak{V}$, but it is not true in general that there exists $k$ such that $\mathfrak{V}$ is generated by $F_k(\mathfrak{V})$. If
such $k$ exists, we say that $\mathfrak{V}$ has (finite) \emph{basic rank} $k$.
Otherwise, $\mathfrak{V}$ has \emph{infinite basic rank}. For example, the
algebra of $n\times n$ matrices over $K$ generates a variety of basic rank 2,
while the variety generated by the infinite dimensional Grassmann algebra,
$E=E_0\oplus E_1$, has infinite basic rank.
Here $E_0$ and $E_1$ are the usual notations for
the even and odd components of $E$.

If $n\in \mathbb{N}$, let $P_n$ be the set of multilinear polynomials of degree $n$ in the variables $x_1$, $x_2$, \dots, $x_n$. Regev \cite{Regev} introduced
some numerical invariants for a variety of algebras. If $\mathfrak{V}$ is a variety the sequence
\[c_n(\mathfrak{V}):=\dim \dfrac{P_n}{P_n\cap
T(\mathfrak{V})}\] is called the codimension sequence of $\mathfrak{V}$. Regev  proved that if $\mathfrak{V}$ is a nontrivial variety, then
$c_n(\mathfrak{V})$ is exponentially bounded. In particular, it is well defined $exp(\mathfrak{V}):=\limsup \sqrt[n]{c_n(\mathfrak{V})}$, called the
exponent of $\mathfrak{V}$.

Amitsur conjectured in the 80's that for any variety, the limit $\displaystyle\lim_{n\to \infty}\sqrt[n]{c_n(\mathfrak{V})}$ exists and is an integer. This
result was only established by Giambruno and Zaicev in the end of the 90's \cite{GZ98, GZ99}.

A description of varieties of algebras with small exponent is known. A variety $\mathfrak{V}$ has exponent 1 if and only if $c_n(\mathfrak{V})$ is
polynomially bounded, which is equivalent to $E\not \in \mathfrak{V}$ and $UT_2(K)\not \in \mathfrak{V}$ \cite{Kemer78, Kemer79}. Here $UT_n(K)$ is the
algebra of $n\times n$ upper-triangular matrices over $K$. Giambruno and Zaicev \cite{GZ2000} listed five algebras with the property that a variety has
exponent $\leq 2$ if and only if it does not contain any of these algebras. The algebras listed by Giambruno and Zaicev are $UT_3(K)$, $M_2(K)$ and the
subalgebras of $M_2(E)$:
\[A=\begin{pmatrix}%
E_0 & E\\
0 & E\\
\end{pmatrix}, \ \ A'=\begin{pmatrix}%
E & E\\
0 & E_0\\
\end{pmatrix} \ \ \mbox{and} \ \ M_{1,1}(E)=\begin{pmatrix}%
E_0 & E_1\\
E_1 & E_0\\
\end{pmatrix}.\]

The first two algebras have finite basic rank while the last three are of infinite basic rank. Each of the above algebras generates a variety with exponent
greater than 2 and any subvariety of it has exponent $\leq 2$. A variety with this property is called a \emph{minimal variety}. It is worth mentioning that
$UT_3(K)$, $A$ and $A'$ have exponent 3, while $M_2(K)$ and $M_{1,1}(E)$ have exponent 4.

This paper deals with the variety $\mathfrak{M}_5$, generated by the algebra $A$ above.
It follows from a theorem of Lewin \cite{Lewin} that
\[T(A)=T(E_0)T(E)=\langle [x_1,x_2][x_3,x_4,x_5] \rangle^T.\]
Here $[x_1,x_2]=x_1x_2-x_2x_1$ and $[x_3,x_4,x_5]=[[x_3,x_4],x_5]$.

One can observe that the polynomial identities of the algebra $A'$ follow from the polynomial identity $[x_1,x_2,x_3][x_4, x_5]$. Hence the relatively free
algebras $F_n(A)$ and $F_n(A')$ are antiisomorphic and this immediately allows one to transfer the results of $F_n(A)$ to the case of $F_n(A')$.

In \cite{DDN}, Di Vincenzo, Drensky and Nardozza give a decomposition of the
$S_n$-module of proper multilinear polynomials modulo the identities of $A$, $\Gamma_n(\mathfrak{M}_5)$, as a sum of its irreducible components.

In this paper, we use the decomposition of $\Gamma_n(\mathfrak{M}_5)$ to prove our main theorem
(Theorem \ref{teoremaprincipal}) which exhibits for each $n$ a minimal set of generators of the identities of
the relatively free algebra of rank $n$ of $\mathfrak{M}_5$. In general, for an arbitrary variety
$\mathfrak{V}$ the question of identities for $F_n(\mathfrak{V})$ has been answered in very few cases.
For example if $\mathfrak{E}$ is the variety generated by the Grassmann algebra, the identities
of $F_n(\mathfrak{E})$ equals the identities of the finite dimensional Grassmann algebra
$E^{(n)}$, and the identities of $F_2(M_{1,1})$ have been described by Koshlukov and the second author in \cite{KdM}.
Last, we exhibit for each $n\geq 1$ a finite dimensional algebra $A^{(n)}$ PI-equivalent to $F_n(\mathfrak{M}_5)$, i.e., $A^{(n)}$ satisfies the same polynomial identities of $F_n(\mathfrak{M}_5)$.

\section{Preliminaries}

Let $K$ be a field of characteristic zero. We denote by $\KX$ the free associative unitary $K$-algebra freely generated by the set $X=\{x_1,x_2,\dots\}$. From
now on (unless otherwise stated) the word \emph{algebra} means an associative and unitary algebra over $K$.
For any PI-algebra $R$ we denote by $T(R)$ its ideal of polynomial identities and for any $n\geq 2$ its relatively free algebra of rank $n$ is defined as
\[F_n(R)=\frac{K\langle x_1,\ldots, x_n\rangle}{K\langle x_1,\ldots,x_n\rangle \cap
T(R)}.\]

If $V$ is a $K$-vector space with a basis $\{e_1, e_2, \dots\}$, the \emph{Grassmann algebra} (or exterior algebra) of $V$, $E=E(V)$ is the vector space with a basis
consisting of 1 and all products $e_{i_1}e_{i_2}\cdots e_{i_k}$, $i_1<i_2<\cdots<i_k$, $k\ge 1$. The multiplication in $E$ is induced by the
anticommutative law for the $e_i$'s, i.e., by $e_ie_j=-e_je_i$ for all $i$ and $j$. In a similar way, one defines the finite dimensional Grassmann algebra $E^{(n)}$, of a finite dimensional vector space with basis $\{e_1, \dots, e_n\}$.

The following result is well-known. See \cite{KrRe} for more details.

\begin{teo}
The T-ideal of the polynomial identities of $E$ is generated by
\[[x_1,x_2,x_3].\]
\end{teo}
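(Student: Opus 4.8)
The plan is to prove the two inclusions $\langle [x_1,x_2,x_3]\rangle^T \subseteq T(E)$ and $T(E) \subseteq \langle [x_1,x_2,x_3]\rangle^T$ separately, the first being routine and the second requiring a dimension count.

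For the first inclusion I would check directly that $[x_1,x_2,x_3]$ vanishes on $E$. Writing $E = E_0 \oplus E_1$ with $E_0$ central, one observes that for any homogeneous $a,b \in E$ the commutator $[a,b]$ lies in $E_0$: if either factor is even it commutes with everything, while if both are odd then $ab = -ba$, so $[a,b] = 2ab \in E_0$. Since $E_0$ is central, $[[a,b],c] = 0$ for all $a,b,c \in E$, hence $[x_1,x_2,x_3] \in T(E)$ and therefore $\langle [x_1,x_2,x_3]\rangle^T \subseteq T(E)$.

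For the reverse inclusion, set $I = \langle [x_1,x_2,x_3]\rangle^T$. Since $\mathrm{char}\,K = 0$, every T-ideal is determined by its multilinear components, so it suffices to prove $P_n \cap T(E) \subseteq P_n \cap I$ for every $n$, where $P_n$ is the space of multilinear polynomials of degree $n$. The first key step is to establish a normal form for $P_n$ modulo $I$: because $[[x_i,x_j],x_k] \in I$, commutators become central modulo $I$, and using this together with the Jacobi identity and the rewriting $[x_i,x_j] = -[x_j,x_i]$ one shows that $P_n$ is spanned modulo $I$ by the products
\[ x_{i_1}\cdots x_{i_p}\,[x_{j_1},x_{j_2}][x_{j_3},x_{j_4}]\cdots[x_{j_{2s-1}},x_{j_{2s}}], \]
where $\{i_1,\dots,i_p,j_1,\dots,j_{2s}\} = \{1,\dots,n\}$, $i_1 < \cdots < i_p$, each $j_{2t-1} < j_{2t}$, and $j_1 < j_3 < \cdots < j_{2s-1}$. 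A direct count shows there are exactly $2^{n-1}$ such elements, so $\dim P_n/(P_n \cap I) \le 2^{n-1}$.

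Finally, I would show that these $2^{n-1}$ normal-form elements are linearly independent modulo $T(E)$ by evaluating the variables at suitable generators $e_i$ of $E$ (and products thereof); distinct normal forms then produce distinct, linearly independent basis elements of $E$, giving $\dim P_n/(P_n\cap T(E)) \ge 2^{n-1}$. Combined with the inclusion $I \subseteq T(E)$ from the first part, this forces $\dim P_n/(P_n\cap I) = \dim P_n/(P_n\cap T(E)) = 2^{n-1}$, hence $P_n \cap I = P_n \cap T(E)$ for all $n$, and multilinearity yields $I = T(E)$. The main obstacle is the normal-form step: verifying that the rewriting rules arising from $[x_1,x_2,x_3]$ genuinely reduce every multilinear monomial to the claimed spanning set, and then matching the combinatorial count with the independence computation in $E$ so that the two dimensions coincide exactly.
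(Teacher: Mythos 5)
Your strategy is the classical Krakowski--Regev dimension count; this is in fact the only proof the paper has in mind, since it gives no argument of its own and simply points to \cite{KrRe}. The approach is the right one, but as written your spanning/counting step has a genuine gap. The rewriting rules you list (commutators are central modulo $I$, skew-symmetry $[x_i,x_j]=-[x_j,x_i]$, and Jacobi, which gives nothing new once all triple commutators are $0$ modulo $I$) only reduce a multilinear monomial to a product $x_{i_1}\cdots x_{i_p}[x_{j_1},x_{j_2}]\cdots[x_{j_{2s-1}},x_{j_{2s}}]$ in which the commutator part is an \emph{arbitrary perfect matching} of the chosen $2s$-element subset; your ordering constraints ($j_{2t-1}<j_{2t}$ and $j_1<j_3<\cdots<j_{2s-1}$) are exactly the canonical labelling of such a matching, so for each $2s$-subset there are $(2s-1)!!$ normal forms, and your spanning set has $\sum_{s}\binom{n}{2s}(2s-1)!!$ elements, not $2^{n-1}$. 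Already for $n=4$ this is $1+6+3=10>8=2^{3}$, so the squeeze between your upper bound and the lower bound $2^{n-1}$ from the independence computation no longer forces $P_n\cap I=P_n\cap T(E)$, and the proof stops there.

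The missing ingredient is the relation $[x_1,x_2][x_3,x_4]+[x_1,x_3][x_2,x_4]\in\langle[x_1,x_2,x_3]\rangle^{T}$, which the paper itself records explicitly (citing Lemma 5.1.1 of \cite{Drensky}). Modulo $I$ this makes a product of commutators alternating under arbitrary permutations of all $2s$ entries, so each $2s$-subset contributes exactly one canonical product (say the one with $j_1<j_2<\cdots<j_{2s}$), and the count drops to $\sum_{s}\binom{n}{2s}=2^{n-1}$ as you need. A second, smaller, looseness: no single substitution makes distinct normal forms go to linearly independent elements of $E$; under the natural evaluation $x_j\mapsto e_j$ for $j$ in a fixed even subset $S$ and $x_i\mapsto 1$ otherwise, \emph{every} normal form whose commutator set $T$ is contained in $S$ evaluates to a nonzero scalar multiple of the same monomial $e_{j_1}\cdots e_{j_{|S|}}$. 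One closes the independence argument by triangularity with respect to inclusion of subsets (or by taking $S$ minimal with nonzero coefficient), not by the claim that distinct normal forms produce distinct basis elements. With these two repairs your proof becomes the standard one.
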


One of the facts used in this paper is that the polynomials
\[[x_1,x_2][x_2,x_3] \ \
\mbox{and} \ \ [x_1,x_2][x_3,x_4]+[x_1,x_3][x_2,x_4]\]
follow from the triple commutator. For more details, see
\cite[Lemma 5.1.1]{Drensky}.

The following is a well-known fact (see \cite[Exercise 5.1.3]{Drensky}).
\begin{teo}
Let $k\geq 1$ be an integer.
The T-ideal of the polynomial identities of $E^{(2k)}$ is generated by
\[[x_1,x_2,x_3] \ \ \mbox{and} \ \  [x_1,x_2]\cdots [x_{2k+1},x_{2k+2}].\]
Moreover,
$T(E^{(2k)})=T(E^{(2k+1)})$.
\end{teo}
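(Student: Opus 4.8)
The plan is to show both generating polynomials lie in $T(E^{(2k)})$ and then that they generate all of it. For the first polynomial, note that $E^{(2k)}$ is a subalgebra of $E$, so it inherits the identity $[x_1,x_2,x_3]$ from the theorem on $T(E)$ quoted above. For the second, I would use the $\mathbb{Z}_2$-grading $E=E_0\oplus E_1$: since $E_0$ is central, for $a,b\in E$ one has $[a,b]=[a_1,b_1]=2a_1b_1\in E_0$, a scalar multiple of a product of two odd elements. Hence a product of $k+1$ commutators evaluates to $2^{k+1}$ times a product of $2k+2$ odd elements, which is a sum of monomials of length $\geq 2k+2>2k$; all of these vanish in $E^{(2k)}$, since its top exterior degree is $2k$. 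Thus, writing $J=\langle [x_1,x_2,x_3],\, [x_1,x_2]\cdots[x_{2k+1},x_{2k+2}]\rangle^T$, we have $J\subseteq T(E^{(2k)})$, and it remains to prove the reverse inclusion.

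Since $\operatorname{char}K=0$, it suffices to compare multilinear parts. Here I would invoke the Krakowski--Regev description of $F(E)=\KX/\langle[x_1,x_2,x_3]\rangle^T$, whose multilinear component of degree $n$ has a basis consisting of elements $w_S=\bigl(\prod_{i\notin S}x_i\bigr)[x_{j_1},x_{j_2}]\cdots[x_{j_{2q-1}},x_{j_{2q}}]$, where $S=\{j_1<\cdots<j_{2q}\}$ and the pairing is the standard increasing one; the consequences $[x_1,x_2][x_2,x_3]$ and $[x_1,x_2][x_3,x_4]+[x_1,x_3][x_2,x_4]$ recalled above are exactly what is needed to sort any product of commutators into this standard form. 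Because commutators are central modulo $T(E)\subseteq J$, any product of $\geq k+1$ commutators factors through an instance of $[x_1,x_2]\cdots[x_{2k+1},x_{2k+2}]$ and so lies in $J$. Consequently the multilinear degree-$n$ part of $\KX/J$ is spanned by the elements $w_S$ with $|S|=2q$ even and $q\leq k$.

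The crux is then to prove that these $w_S$ are linearly independent modulo $T(E^{(2k)})$; this is the main obstacle, and it is where the bound $2k$ enters. I would argue by evaluation in $E^{(2k)}$. Fix an even subset $S_0$ with $|S_0|\leq 2k$, assign distinct odd generators $e_{c(i)}$ to the variables $x_i$ with $i\in S_0$ (there are at most $2k$ of them, so enough generators exist), and set $x_i\mapsto 1$ for $i\notin S_0$. Under this substitution a summand $w_S$ survives only if $S\subseteq S_0$ (a commutator variable sent to the central element $1$ kills its factor), and then $w_S\mapsto \pm 2^{|S|/2}\prod_{i\in S_0}e_{c(i)}$, a fixed nonzero monomial times a nonzero scalar. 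A relation $\sum_S\alpha_S w_S\in T(E^{(2k)})$ therefore yields $\sum_{S\subseteq S_0}\pm 2^{|S|/2}\alpha_S=0$ for every such $S_0$; as the coefficient of $\alpha_{S_0}$ is $\pm 2^{|S_0|/2}\neq0$, induction on $|S_0|$ (base case $S_0=\varnothing$) forces every $\alpha_S=0$. This gives the independence for all $n$, and combined with the spanning statement it yields $P_n\cap J=P_n\cap T(E^{(2k)})$ for all $n$, hence $J=T(E^{(2k)})$.

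Finally, for the equality $T(E^{(2k)})=T(E^{(2k+1)})$, I would observe that $E^{(2k)}\subseteq E^{(2k+1)}$ gives $T(E^{(2k+1)})\subseteq T(E^{(2k)})$, while the same degree count as in the first paragraph shows the product of $k+1$ commutators also vanishes on $E^{(2k+1)}$ (its monomials have length $\geq 2k+2>2k+1$), so $J\subseteq T(E^{(2k+1)})$. Using the already-proved identity $T(E^{(2k)})=J$, these two inclusions sandwich to $J\subseteq T(E^{(2k+1)})\subseteq T(E^{(2k)})=J$, forcing all three to coincide. I expect the only delicate point to be confirming that the reduction in the second paragraph produces \emph{exactly} the list $\{w_S:\ |S|\le 2k\}$ with no further collapsing among elements having at most $k$ commutators; the triangular evaluation of the third paragraph is precisely what certifies this.
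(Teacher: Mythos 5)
Your proof is correct. Note, though, that the paper does not actually prove this statement: it quotes it as a well-known fact with a pointer to Drensky's book (Exercise 5.1.3), so there is no internal proof to compare against. Your argument is the standard one that the citation stands in for, and it is complete: the inclusion $J\subseteq T(E^{(2k)})$ by the degree count on products of odd elements; spanning of the multilinear part of $\KX/J$ by the Krakowski--Regev normal forms $w_S$ with $|S|\le 2k$ (using that commutators are central and sortable modulo $T(E)$, and that any $w_S$ with $|S|\ge 2k+2$ is a substitution instance, times factors, of the long product of commutators); and independence of these $w_S$ modulo $T(E^{(2k)})$ by the triangular evaluation $x_i\mapsto e_{c(i)}$ for $i\in S_0$, $x_i\mapsto 1$ otherwise, with induction on $|S_0|$. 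The final sandwich $J\subseteq T(E^{(2k+1)})\subseteq T(E^{(2k)})=J$ is exactly the right way to get the last claim. Two small points worth making explicit if you write this up: the identity $[a,b]=2a_1b_1$ uses that $E_1E_1\subseteq E_0$ and that odd elements anticommute, and the passage from equality of multilinear components to $J=T(E^{(2k)})$ uses that in characteristic zero every T-ideal is generated by its multilinear parts; both are standard and you clearly intend them. Incidentally, the sorting fact you invoke (a product of commutators equals $\pm$ its fully ordered form modulo $T(E)$) is the same mechanism the paper isolates in its Lemma 2 for its own purposes, so your route is consistent with the toolkit the paper itself uses.
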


Given a PI-algebra $R$ and polynomials $f$, $g\in \KX$, we denote \[f \ \equiv ^R \  g\] if $(f-g) \in T(R)$, i.e., if the equality
\[f+T(R)=g+T(R)\]
holds in  $\KX/T(R)$.

The Grassmann algebra is $\mathbb{Z}_2$-graded: one can easily verify that $E=E_0\oplus E_1$
where the vector
subspace $E_j$ is spanned by all
elements
$e_{i_1}e_{i_2}\cdots e_{i_k}$ from its basis
such that $k\equiv j\pmod{2}$.
Here $E_aE_b\subseteq E_{a+b}$
where $a+b$ is considered module 2. Recall that $E_0$ is just the
centre of $E$.

From this decomposition, define the matrix algebra \[A=\left(
                                                        \begin{array}{cc}
                                                          E_0 & E \\
                                                          0 & E \\
                                                        \end{array}
                                                      \right) \ .\]

The T-ideal generated by the polynomial $[x_1, x_2][x_3, x_4, x_5]$ was studied in the paper by Stoyanova-Venkova \cite{SV}. The fact that this T-ideal
coincides with the ideal $T(A)$ is an easy consequence of the theorem of Lewin \cite{Lewin} and the result of Krakowski and Regev \cite{KrRe} about the
polynomial identities of the Grassmann algebra.

\begin{teo}
The T-ideal of the polynomial identities of $A$ is generated by
\[[x_1,x_2][x_3,x_4,x_5].\]
\end{teo}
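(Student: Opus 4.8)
The plan is to prove $T(A)=\langle [x_1,x_2][x_3,x_4,x_5]\rangle^T$ by combining Lewin's theorem with the known identities of $E_0$ and $E$, exactly as the text foreshadows. First I would observe that $A$ is, up to the block structure, the algebra obtained from the upper-triangular $2\times 2$ construction whose diagonal blocks are the algebras $E_0$ and $E$ and whose off-diagonal block is the bimodule $E$. This is precisely the setting of Lewin's theorem, which states that if an algebra $R$ is built as $\left(\begin{smallmatrix} R_1 & M \\ 0 & R_2\end{smallmatrix}\right)$ where $M$ is a faithful $(R_1,R_2)$-bimodule, then $T(R)=T(R_1)T(R_2)$, the product of the two T-ideals. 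So the main structural step is to verify that the hypotheses of Lewin's theorem hold for our $A$: that $E$ is a faithful $(E_0,E)$-bimodule, which is immediate since $E$ contains $1$ and acts faithfully on itself, and $E_0$ acts faithfully as the centre.

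Granting Lewin, we get $T(A)=T(E_0)\,T(E)$. The next step is to identify each factor. By the first cited theorem, $T(E)=\langle [x_1,x_2,x_3]\rangle^T$. For $E_0$, which is commutative, we have $T(E_0)=\langle [x_1,x_2]\rangle^T$. Hence
\[
T(A)=\langle [x_1,x_2]\rangle^T\,\langle [x_1,x_2,x_3]\rangle^T.
\]
The content of the theorem is then the claim that this product T-ideal is generated as a T-ideal by the single polynomial $[x_1,x_2][x_3,x_4,x_5]$. One inclusion is easy: since $[x_1,x_2]\in T(E_0)$ and $[x_3,x_4,x_5]\in T(E)$, the product $[x_1,x_2][x_3,x_4,x_5]$ lies in $T(E_0)T(E)=T(A)$, so $\langle [x_1,x_2][x_3,x_4,x_5]\rangle^T\subseteq T(A)$.

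The harder inclusion, and the step I expect to be the main obstacle, is showing $T(E_0)T(E)\subseteq \langle [x_1,x_2][x_3,x_4,x_5]\rangle^T$. The point is that a general element of $T(E_0)T(E)$ is a sum of products $u\,v$ where $u\in T(E_0)=\langle[x_1,x_2]\rangle^T$ and $v\in T(E)=\langle[x_1,x_2,x_3]\rangle^T$, and one must rewrite each such product modulo the T-ideal generated by $[x_1,x_2][x_3,x_4,x_5]$. Since $u$ is a consequence of a single commutator and $v$ is a consequence of a single triple commutator, the standard reduction is to write $u$ as a sum of terms each containing a factor $[\,\cdot\,,\cdot\,]$ and $v$ as a sum of terms each containing a factor $[\,\cdot\,,\cdot\,,\cdot\,]$, and then to use the commutator identities recalled in the excerpt, namely that $[x_1,x_2][x_2,x_3]$ and $[x_1,x_2][x_3,x_4]+[x_1,x_3][x_2,x_4]$ follow from the triple commutator, to move the commutator and the triple commutator next to each other. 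The technical core is an inductive or rewriting argument showing that in any associative algebra, a product of an element of $\langle[x_1,x_2]\rangle^T$ with an element of $\langle[x_1,x_2,x_3]\rangle^T$ can be reduced, modulo lower-complexity terms already handled, to a $K$-linear combination of substitution instances $g_0\,[f_1,f_2][f_3,f_4,f_5]\,g_1$ of the generator. This is essentially the computation carried out by Krakowski--Regev and Lewin, so I would either cite it directly or sketch the rewriting, taking care that the two commutator identities above are exactly what licenses reassociating commutators past one another. Once this rewriting is complete both inclusions are established and the theorem follows.
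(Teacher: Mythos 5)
Your route is the one the paper itself indicates (the paper gives no proof of this theorem, only the remark that it is a consequence of Lewin's theorem and Krakowski--Regev), and several pieces of your write-up are sound: the identifications $T(E)=\langle[x_1,x_2,x_3]\rangle^T$ and $T(E_0)=\langle[x_1,x_2]\rangle^T$, the easy inclusion $\langle[x_1,x_2][x_3,x_4,x_5]\rangle^T\subseteq T(A)$ (which needs only the trivial half of Lewin, since $T(R_1)T(R_2)\subseteq T(R)$ holds for \emph{every} bimodule $M$), and the identification of the product $\langle[x_1,x_2]\rangle^T\langle[x_1,x_2,x_3]\rangle^T$ with $\langle[x_1,x_2][x_3,x_4,x_5]\rangle^T$ (though the identity that licenses this rewriting is $w[u,v,z]=[u,v,z]w-[[u,v,z],w]$, i.e.\ moving a factor past a commutator at the cost of lengthening it, not the two Grassmann consequences you quote). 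The genuine gap is in the main structural step. Lewin's theorem does not have ``faithful bimodule'' as its hypothesis: it requires $M$ to be a \emph{free} $(R_1,R_2)$-bimodule on a countably infinite set of free generators, and freeness is used exactly in the hard direction $T(R)\subseteq T(R_1)T(R_2)$, which for you is $T(A)\subseteq T(E_0)T(E)$. With mere one-sided faithfulness (``$E$ contains $1$'') nothing beyond the easy inclusion is available, so as written your proposal establishes only $\langle[x_1,x_2][x_3,x_4,x_5]\rangle^T\subseteq T(A)$.

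Moreover, the correct hypothesis genuinely fails for $A$: $E$ is \emph{not} a free $(E_0,E)$-bimodule. Since $E_0$ is the centre of $E$, every $m\in E$ satisfies $amb=m(ab)$ for $a\in E_0$, $b\in E$, so the sub-bimodule generated by $m$ is $mE$, a quotient of the right $E$-module $E$ rather than a copy of $E_0\otimes E$; consequently $E$ contains no nonzero free $(E_0,E)$-sub-bimodule at all, and Lewin's theorem cannot be applied to $A$ as it stands. The standard repair, which is what ``an easy consequence of Lewin'' conceals, is to apply Lewin inside a subalgebra of $A$: split the Grassmann generators into three disjoint infinite families $V_1,V_2,V_3$, set $E_0'=E(V_1)_0$, $E''=E(V_2)$, and $M=\sum_j E_0'\,w_j\,E''$ with the $w_j$ running over $V_3$. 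Because the multiplication map $E(V_1)\otimes E(V_2)\to E(V_1\cup V_2)$ is a linear isomorphism, $M$ is a free $(E_0',E'')$-bimodule on the $w_j$, so Lewin's theorem applies to the subalgebra $B=\left(\begin{smallmatrix}E_0' & M\\ 0 & E''\end{smallmatrix}\right)$ of $A$ and yields $T(B)=T(E_0')T(E'')=T(E_0)T(E)$, using that $E_0'\cong E_0$ and $E''\cong E$ as algebras. Then $T(A)\subseteq T(B)=T(E_0)T(E)$ closes the hard inclusion; alternatively, you could simply cite this equality from Di Vincenzo--Drensky--Nardozza rather than re-derive it.
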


The so-called higher commutators are defined inductively by $[x_1,x_2]:=x_1x_2-x_2x_1$ and $[x_1,\dots,x_{n}]:=[[x_1,\dots,x_{n-1}],x_{n}]$. The latter is
called a commutator of length $n$. Specht introduced a special type of polynomials. A polynomial is called \emph{proper} if it is a linear combination of
products of commutators. The importance of such polynomials lies in the fact that the identities of an algebra with unit follows from its proper ones
(see \cite[Proposition 4.3.3]{Drensky}). We denote by $P_n$ the vector space of all multilinear polynomials of degree $n$ in $K\langle x_1,\dots,x_n\rangle$ and
by $\Gamma_n$ its subspace spanned by proper multilinear polynomials. If $\mathfrak{M}$ is a variety of algebras,
denote
\[P_n(\mathfrak{M})=\dfrac{P_n}{P_n\cap T(\mathfrak{M})} \ \ \mbox{and} \ \
 \Gamma_n(\mathfrak{M})=\dfrac{\Gamma_n}{\Gamma_n \cap T(\mathfrak{M})} \ .\] It is well-known
that $P_n(\mathfrak{M})$ and $\Gamma_n(\mathfrak{M})$ are modules over the symmetric group $S_n$ .  The main tool to study $\Gamma_n(\mathfrak{M})$ and
$P_n(\mathfrak{M})$ is provided by the representation theory of $S_n$. We refer the reader to \cite[Chapter 12]{Drensky} for all necessary information
concerning the representation of such group and their applications to PI-theory.

We now define a list of polynomials that will be important in the description of the $S_n$-module structure of $\Gamma_n(\mathfrak{M})$. If $m$ and $i$ are
``appropriate" positive integers, we define:

\begin{enumerate}
 \item $f_m^{(1)}=[x_2,x_1,\ldots,x_1]$.

 \item $f_{m,i}^{(2)}=[x_2,x_1,\dots,x_1][x_1,x_2]\cdots [x_{i-1},x_{i}]$.

 \item $f_{m,i}^{(3)}=\displaystyle \sum_{\sigma\in S_{i}} (-1)^{\sigma} [x_{\sigma(1)},x_1,\dots,x_1]
 [x_{\sigma(2)},x_{\sigma(3)}]\ldots [x_{\sigma(i-1)},x_{\sigma(i)}]$.

 \item $f_{m,i}^{(4)}=\displaystyle\sum_{\sigma\in S_i} (-1)^{\sigma}
 [x_2,x_1,x_{\sigma(1)},x_1,\dots,x_1][x_{\sigma(2)},x_{\sigma(3)}]\cdots
[x_{\sigma(i-1)},x_{\sigma(i)}]$.

 \item $f_{m,i}^{(5)}=\displaystyle\sum_{\sigma\in S_i} (-1)^{\sigma}
 [x_{\sigma(1)},x_{\sigma(2)},x_1,\dots,x_1][x_{\sigma(3)},x_{\sigma(4)}]\cdots
[x_{\sigma(i-1)},x_{\sigma(i)}]$.
 \end{enumerate}

\noindent In the above definition, the integer $m$ stands for the degree of the polynomial. The values for $m$ and $i$ are presented in the table below:

\[
 \begin{array}{|c|c|c|c|}
\hline
 \mbox{Polynomial} & m\geq & i\geq & \mbox{$i \mod 2$} \\
 \hline
 1.&2& &\\
 \hline
 2.&4& 2 & \mbox{0}\\
 \hline
 3.&4& 3& \mbox{1}\\
 \hline
 4.&5& 3 & \mbox{1}\\
 \hline
 5.&4& 4& \mbox{0}\\
 \hline
 \end{array}
\]

Before proceed to the description of $\Gamma_m(A)$, we define some polynomials. Let $u_m^{(1)}$ be the complete multilinearization of $f_m^{(1)}$ and
$u_{m,i}^{(j)}$ be the complete multilinearization of $f_{m,i}^{(j)}$. We recall that for an algebra over a field of characteristic zero, a polynomial $f$
is an identity if and only if its complete multilinearization is also an identity. One can show that $u_m^{(1)}$ and $u_{m,i}^{(j)}$ generate irreducible
$S_m$-modules pairwise non-isomorphic, corresponding to the partitions $(m-1, 1)$ for $u_m^{(1)}$, $(m- i, 2, 1^{i-2})$ for $u_{m,i}^{(2)}$ and
$u^{(4)}_{m,i}$, and $(m-i+1,1^{i-1})$ for $u_{m,i}^{(3)}$ and $u^{(5)}_{m,i}$. The above fact and the next theorem can be found in \cite{SV} and in
\cite{DDN}.

\begin{teo}\label{seqcarprdea}
The
$S_m$-module \ $\Gamma_m(A)$ \ decomposes as a direct sum of irreducible $S_m$-modules pairwise non-isomorphic in the following way:

\vspace{0.15cm}

\emph{1}. $\Gamma_2(A)=(KS_2) \cdot u_2^{(1)}$.

\vspace{0.15cm}

 \emph{2}. $\Gamma_3(A)=(KS_3)\cdot u_3^{(1)}$.

 \vspace{0.15cm}

 \emph{3}. $\Gamma_4(A)=(KS_4)\cdot u_4^{(1)}\oplus (KS_4) \cdot u_{4,2}^{(2)}\oplus
 (KS_4) \cdot u_{4,3}^{(3)}\oplus  (KS_4) \cdot u_{4,4}^{(5)}$.

 \vspace{0.15cm}

 \emph{4}. If  $m\geq 5$,  $\Gamma_m(A)$ decomposes as
 \[(KS_m)\cdot u_m^{(1)} \bigoplus_{\substack{i=2 \\ i\text{ even}}}^{m-2} (KS_m) \cdot u_{m,i}^{(2)}
\bigoplus_{\substack{i=3 \\ i\text{ odd}}}^{m-1} (KS_m)\cdot u_{m,i}^{(3)}
 \bigoplus_{\substack{i=3 \\ i\text{ odd}}}^{m-2} (KS_m)\cdot
u_{m,i}^{(4)}  \bigoplus_{\substack{i=4 \\ i\text{ even}}}^{m} (KS_m)\cdot u_{m,i}^{(5)}\] Moreover, the proper cocharacter sequence of $A$ is

$\chi_2(A)=\chi(1^2)$, $\chi_3(A)=\chi(2,1)$, $\chi_4(A)=\chi(3,1)+\chi(2,1^2)+\chi(1^4)+\chi(2^2)$ and if $m\geq 5$, we have

$\chi_m(A)=\sum_{i=2}^{m-1}\chi(m-i+1,1^{i-1})+\sum_{i=2}^{m-2}\chi(m-i,2,1^{i-2})$, if $m$ is odd and

$\chi_m(A)=\sum_{i=2}^{m}\chi(m-i+1,1^{i-1})+\sum_{i=2}^{m-2}\chi(m-i,2,1^{i-2})$, if $m$ is even.
\end{teo}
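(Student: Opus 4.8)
The plan is to reduce the statement to a spanning claim together with a non-vanishing claim, and to let the complete reducibility of $\Gamma_m(A)$ finish the argument. Since $\mathrm{char}\,K=0$, the module $\Gamma_m(A)$ is semisimple, and by the paragraph preceding the statement the polynomials $u_m^{(1)}$ and $u_{m,i}^{(j)}$ generate irreducible submodules whose shapes, over the index ranges occurring in the claimed decomposition, are pairwise distinct; hence the proposed decomposition is multiplicity-free. It therefore suffices to prove that (i) these generators span $\Gamma_m(A)$, and (ii) none of them lies in $T(A)$. Granting (i) and (ii), the submodules they generate are pairwise non-isomorphic nonzero irreducibles that together span, so their sum is automatically direct and exhausts $\Gamma_m(A)$, and the cocharacter is read off by collecting the partition labels. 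The low-degree cases $m=2,3,4$ I would dispatch by a direct computation in $\Gamma_m$ modulo $T(A)$; the content is the case $m\geq 5$.

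For the spanning claim I would first normalise proper polynomials using the explicit form of $A$. Substituting $x_i\mapsto\left(\begin{smallmatrix}\alpha_i&\beta_i\\0&\gamma_i\end{smallmatrix}\right)$ with $\alpha_i\in E_0$ central, a commutator of length two evaluates to a matrix whose $(2,2)$-entry is the central element $[\gamma_i,\gamma_j]\in E_0$, while a commutator of length at least three has zero $(2,2)$-entry, because $E$ satisfies $[x_1,x_2,x_3]$. Writing $p_j,q_j$ for the $(1,2)$- and $(2,2)$-entries of a commutator $c_j$, a product $c_1\cdots c_k$ has $(2,2)$-entry $q_1\cdots q_k$ and $(1,2)$-entry $p_1q_2\cdots q_k$; consequently it is an identity of $A$ as soon as some $c_j$ with $j\geq 2$ is long, which is precisely the force of the defining identity $[x_1,x_2][x_3,x_4,x_5]$. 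Thus $\Gamma_m(A)$ is spanned by products whose factors beyond the first all have length two. I would then organise these through the corner homomorphism $\rho\colon A\to E$, $\left(\begin{smallmatrix}a&b\\0&c\end{smallmatrix}\right)\mapsto c$, which yields a short exact sequence of $S_m$-modules
\[0\longrightarrow N_m\longrightarrow \Gamma_m(A)\stackrel{\rho_*}{\longrightarrow}\Gamma_m(E)\longrightarrow 0,\]
where $\Gamma_m(E)$ is the sign representation $\chi_{(1^m)}$ for $m$ even and vanishes for $m$ odd. The quotient is realised by the fully alternating product $u_{m,m}^{(5)}$ (present exactly when $m$ is even), so the problem reduces to computing the kernel $N_m$, the part of $\Gamma_m(A)$ seen only in the $(1,2)$-corner.

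To handle $N_m$ I would invoke Lewin's theorem \cite{Lewin}: since $T(A)=T(E_0)T(E)$, the relatively free algebra of $A$ embeds into the upper triangular algebra $\left(\begin{smallmatrix}F(E_0)&M\\0&F(E)\end{smallmatrix}\right)$, with $M$ the free $(F(E_0),F(E))$-bimodule on the images of the generators. Because $F(E_0)$ is the polynomial algebra (trivial proper cocharacter) and $F(E)$ carries the Grassmann proper cocharacter, $N_m$ is identified with the proper multilinear part of $M$ in degree $m$, and its cocharacter is computed from these two ingredients by the corresponding induction product. I would then check that this equals $\sum_{i=2}^{m-1}\chi_{(m-i+1,1^{i-1})}+\sum_{i=2}^{m-2}\chi_{(m-i,2,1^{i-2})}$, the contributions being carried exactly by $f_m^{(1)}$, $f_{m,i}^{(2)}$, $f_{m,i}^{(3)}$, $f_{m,i}^{(4)}$ and the remaining $f_{m,i}^{(5)}$. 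This gives the upper bound.

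For the non-vanishing (ii) I would exhibit, for each generator, a substitution into $A$ with nonzero value. The alternating product $u_{m,m}^{(5)}$ is detected in the $(2,2)$-corner by a product of distinct central commutators $[\gamma_i,\gamma_j]$, which is nonzero in $E_0$ since the $e_\ell$ anticommute freely. Every other generator has a commutator of length at least three at its head, hence zero $(2,2)$-corner; these I would detect in the $(1,2)$-corner through the surviving factor $p_1q_2\cdots q_k$, choosing the entries $\beta,\gamma$ and the central $\alpha$ so that this Grassmann element is nonzero, and feeding the alternating families pairwise distinct generators $e_\ell$ so that a single monomial of each alternating sum survives. The main obstacle is precisely this $(1,2)$-corner analysis: one must control it finely enough both to verify that the alternating families $f_{m,i}^{(3)}$, $f_{m,i}^{(4)}$, $f_{m,i}^{(5)}$ do not collapse, and to confirm that the bimodule computation of $N_m$ produces exactly the listed shapes, each with multiplicity one and no others. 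With (i) and (ii) in place, multiplicity-freeness yields the stated direct-sum decomposition, and the cocharacter formulas follow on collecting labels, the parity restrictions on $i$ guaranteeing that each hook $(m-i+1,1^{i-1})$ and each shape $(m-i,2,1^{i-2})$ occurs exactly once.
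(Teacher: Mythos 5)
First, a point of orientation: the paper does not prove this statement at all. Theorem \ref{seqcarprdea} is quoted from Stoyanova-Venkova \cite{SV} and Di Vincenzo--Drensky--Nardozza \cite{DDN}, immediately after the sentence ``The above fact and the next theorem can be found in \cite{SV} and in \cite{DDN}.'' So your attempt can only be judged on its own merits, and on those merits it is a strategy outline with the two decisive steps left undone. Your scaffolding is sound: in characteristic zero $\Gamma_m(A)$ is semisimple, the listed generators carry pairwise distinct partition labels over the stated index ranges, and the reduction (via $[x_1,x_2][x_3,x_4,x_5]$) to products whose factors beyond the first have length two is correct, as is the exact sequence coming from the corner projection $A\to E$. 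But the content of the theorem is exactly the two items you defer. (a) The upper bound: identifying the kernel $N_m$ through Lewin's factorization $T(A)=T(E_0)T(E)$ and then verifying that its proper multilinear part decomposes as $\sum_{i}\chi(m-i+1,1^{i-1})+\sum_i\chi(m-i,2,1^{i-2})$, \emph{each with multiplicity one and nothing else}, is the heart of the matter; writing ``I would then check that this equals'' the claimed character is asserting the theorem, not proving it. In particular you never engage with the delicate point that two distinct families of generators share each shape ($u^{(3)}_{m,i}$ and $u^{(5)}_{m,i}$ are both hooks $(m-i+1,1^{i-1})$; $u^{(2)}_{m,i}$ and $u^{(4)}_{m,i}$ are both $(m-i,2,1^{i-2})$), so one must prove these contribute only once, with the parity of $i$ selecting which representative survives modulo $T(A)$. (b) The non-vanishing of the alternating generators: you explicitly call this ``the main obstacle'' and leave it open. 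It genuinely is an obstacle --- an alternating sum over $S_i$ has massive cancellation, which is why the paper's own Lemmas \ref{lemfmn3}, \ref{fm4} and \ref{fm5} (for the $F_n(A)$ analogues) avoid direct substitution entirely and instead reduce $f^{(3)}_{m,n+1}$, $f^{(4)}_{m,n+1}$, $f^{(5)}_{m,n+2}$ to $f^{(2)}_{m,n}$ by substitutions such as $x\mapsto x_1x_2$ or $x\mapsto x_1^2$, where a direct evaluation (Lemma \ref{fm2}) is feasible.

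There is also a technical slip in the one concrete mechanism you do propose. You claim that every generator other than $u^{(5)}_{m,m}$ has a commutator of length at least three at its head, hence zero $(2,2)$-corner. This is false inside the stated ranges: $f^{(2)}_{m,m-2}=[x_2,x_1][x_1,x_2][x_3,x_4]\cdots[x_{m-3},x_{m-2}]$ has a head of length two, as does $f^{(3)}_{m,m-1}$, whose head is $[x_{\sigma(1)},x_1]$. In these cases the $(2,2)$-corner does vanish, but for different reasons (respectively $[x_1,x_2][x_2,x_3]\in T(E)$, and the fact that a module map from an irreducible of shape $(2,1^{m-2})$ into the sign representation $\Gamma_m(E)$ must be zero), so your head-length argument needs to be replaced or supplemented in exactly the cases where the hook and near-hook shapes are largest. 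In summary: the framework (multiplicity-freeness plus spanning plus non-vanishing, organized by the corner projection and Lewin's theorem) is a legitimate plan, broadly consistent with how \cite{DDN} proceeds, but the multiplicity computation and the non-degeneracy of the generators --- the theorem itself --- are missing, so the proposal has genuine gaps.
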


\section{The polynomial identities of $F_m(A)$}

In this section we shall describe the polynomial identities of the relatively free algebras of finite rank of the variety generated by the algebra $A$.
Since $F_n(A)$ is a subalgebra of $F(A)$, $\Gamma_m(F_n(A))$ is a homomorphic image of $\Gamma_m(A)$. As a consequence, to describe the $S_n$-module
structure of $\Gamma_m(F_n(A))$, it is enough to verify which of the generators of $\Gamma_m(A)$ are identities of $F_n(A)$.

The main result of this paper is the following:

\begin{teo}\label{teoremaprincipal}
Let $n\geq 1$ be an even number. The T-ideal of the polynomial identities of $F_n(A)$ is generated by
\[[x_1,x_2][x_3,x_4,x_5] \  \
and \  \ [x_1,x_2][x_{3},x_{4}]\cdots [x_{n+3},x_{n+4}].\]
Moreover, \[T(F_n(A))=T(F_{n+1}(A)).\]
\end{teo}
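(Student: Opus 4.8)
The plan is to work throughout modulo $T(A)=\langle [x_1,x_2][x_3,x_4,x_5]\rangle^T$ and to exploit the fact that, since $\operatorname{char}K=0$, a family of polynomials generates $T(F_n(A))$ as a T-ideal if and only if, together with $T(A)$, its multilinear consequences span the kernel of the surjection $\Gamma_m(A)\twoheadrightarrow\Gamma_m(F_n(A))$ for every $m$ (and the analogous statement for $P_m$). Because $F_n(A)\subseteq F(A)$ is generated by $n$ elements, it carries a polynomial $GL_n$-action, so the module $\Gamma_m(F_n(A))$ is obtained from the decomposition in Theorem \ref{seqcarprdea} by discarding exactly those irreducible constituents whose associated partition has more than $n$ rows (the Weyl module $W_\lambda(K^n)$ vanishes precisely when the height of $\lambda$ exceeds $n$). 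Thus the entire theorem reduces to (a) identifying this kernel and (b) checking that the two displayed polynomials generate it.

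The geometric input is the explicit shape of commutators in $A$. For $a,b\in A$ one has $[a,b]=\bigl(\begin{smallmatrix}0 & c(a,b)\\ 0 & [a_{22},b_{22}]\end{smallmatrix}\bigr)$ with $[a_{22},b_{22}]\in E_0$; multiplying $t$ such matrices, the $(2,2)$-entry of $[x_1,x_2]\cdots[x_{2t-1},x_{2t}]$ is a product of $t$ commutators evaluated on the $(2,2)$-components, while the $(1,2)$-entry is an element of $E$ times a product of $t-1$ commutators. When these components are substituted by elements of $F_n(A)$ they range over an $n$-generated subalgebra of $E$, hence are governed by $T(E^{(n)})$, which annihilates a product of $\tfrac n2+1$ commutators. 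To kill \emph{both} entries simultaneously one therefore needs $t-1\ge \tfrac n2+1$, i.e. $t\ge\tfrac n2+2$; this is exactly why the length $\tfrac n2+2$, and hence the degree $n+4$, is forced. Since $[x_1,x_2][x_3,x_4,x_5]\in T(A)\subseteq T(F_n(A))$ already, this shows both displayed polynomials are identities of $F_n(A)$.

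For the converse I would verify that, modulo $T(A)$, the T-ideal generated by $g=[x_1,x_2][x_3,x_4]\cdots[x_{n+3},x_{n+4}]$ already reproduces every constituent that must be deleted: one produces the highest-weight generators $u^{(j)}_{m,i}$ of the discarded (tall) constituents from $g$ by appropriate substitutions and alternations, while each retained (short) constituent is checked to be non-identically zero on $F_n(A)$ by an explicit evaluation in $A$ as above. Matching this against the row-count deletion rule from the first paragraph then gives $T(F_n(A))=\langle [x_1,x_2][x_3,x_4,x_5],\,g\rangle^T$. I expect this converse to be the main obstacle: the delicate point is that the single product $g$, rather than a whole family of alternated generators, suffices to account for all the discarded constituents, which requires controlling how $g$ interacts with the lower-order relations available modulo $T(A)$ and the precise way the $(1,2)$-entry behaves like a shorter product.

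Finally, the equality $T(F_n(A))=T(F_{n+1}(A))$ is best seen directly from the matrix reduction rather than from constituent counting. One always has $T(F_{n+1}(A))\subseteq T(F_n(A))$, and by the analysis above membership of a proper polynomial in $T(F_N(A))$ is dictated entirely by the identities that its $(2,2)$- and $(1,2)$-entries must satisfy in an $N$-generated subalgebra of $E$, that is, by $T(E^{(N)})$. For $N=n$ even the cited result gives $T(E^{(n)})=T(E^{(n+1)})$, so the conditions on the two matrix entries are identical for ranks $n$ and $n+1$; consequently the two T-ideals coincide. This mirrors, and is ultimately a consequence of, the phenomenon $T(E^{(2k)})=T(E^{(2k+1)})$ recorded earlier.
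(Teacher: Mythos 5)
Your opening reduction rests on a false principle, and this breaks the entire plan. The vanishing of the Weyl module $W_\lambda(K^n)$ for $\mathrm{ht}(\lambda)>n$ describes the $GL_n$-module structure of the algebra $F_n(A)$ itself; it does not tell you which multilinear polynomials are \emph{identities} of $F_n(A)$. A multilinear $f$ lies in $T(F_n(A))$ if and only if $f(g_1,\dots,g_m)\in T(A)$ for all $g_1,\dots,g_m\in K\langle x_1,\dots,x_n\rangle$, i.e.\ if and only if $f$ vanishes on every $n$-generated subalgebra of $A$; alternation in more than $n$ variables does not force such vanishing, because the substituted elements need not be the free generators (they can be $1$, products of generators, etc.). Concretely, your deletion rule contradicts the very statement being proved: the constituents $(KS_m)\cdot u^{(3)}_{m,n+1}$, $(KS_m)\cdot u^{(4)}_{m,n+1}$ and $(KS_m)\cdot u^{(5)}_{m,n+2}$ correspond to the partitions $(m-n,1^{n})$, $(m-n-1,2,1^{n-1})$ and $(m-n-1,1^{n+1})$, of heights $n+1$, $n+1$ and $n+2$, yet they do \emph{not} vanish in $\Gamma_m(F_n(A))$ --- this is exactly the content of Lemmas \ref{lemfmn3}, \ref{fm4} and \ref{fm5}, whose proofs require delicate substitutions (e.g.\ $x_{n+1}\mapsto x_1x_2$, reducing $f^{(3)}_{m,n+1}$ to $f^{(2)}_{m,n}$ modulo $T(A)$). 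Only one half of your rule is true: constituents of height at most $n$ are retained, since their generators involve at most $n$ variables and Lemma \ref{identfnaea} applies. The correct cutoffs are $i\le n$ for $u^{(2)}_{m,i}$, but $i\le n+1$ for $u^{(3)}_{m,i}$ and $u^{(4)}_{m,i}$, and $i\le n+2$ for $u^{(5)}_{m,i}$. Your converse step therefore cannot be carried out: under your rule you would need to derive $f^{(3)}_{m,n+1}$ from $g=[x_1,x_2][x_3,x_4]\cdots[x_{n+3},x_{n+4}]$ modulo $T(A)$, which is impossible since $g\in T(F_n(A))$ while $f^{(3)}_{m,n+1}\notin T(F_n(A))$. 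The final paragraph suffers from the same defect: the claim that membership of an arbitrary proper polynomial in $T(F_N(A))$ is governed entirely by $T(E^{(N)})$ acting on the $(1,2)$- and $(2,2)$-entries is precisely what would have to be proved (it is, in essence, the theorem itself), whereas the paper deduces $T(F_n(A))=T(F_{n+1}(A))$ from the inclusions $T(A)\subseteq T(F_{n+1}(A))\subseteq T(F_n(A))$ only after the constituent analysis is complete.

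What does survive is your second paragraph: writing a product of $t$ commutators evaluated in $A$ as a matrix whose $(2,2)$-entry is a product of $t$ commutators in $E$ and whose $(1,2)$-entry is an element of $E$ times a product of $t-1$ commutators, and observing that the $(2,2)$-entries of an $n$-generated subalgebra of $A$ generate an $n$-generated subalgebra of $E$, hence satisfy $T(E^{(n)})$ (granting the standard fact $T(F_n(E))=T(E^{(n)})$), is a correct and arguably more transparent proof of Proposition \ref{segunadaidentdefna} than the paper's rearrangement argument, and it covers $F_{n+1}(A)$ at once because $T(E^{(n)})=T(E^{(n+1)})$. But that is the easy half of the theorem. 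The heart of the matter --- the explicit witnesses showing that $f^{(1)}_{m}$, $f^{(2)}_{m,n}$, $f^{(3)}_{m,n+1}$, $f^{(4)}_{m,n+1}$, $f^{(5)}_{m,n+2}$ are \emph{not} identities of $F_n(A)$ (Lemmas \ref{fm1}--\ref{fm5}), together with the verification that the genuinely discarded generators are consequences of the two stated identities --- is absent from your proposal and cannot be recovered from the height rule.
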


In order to prove it, we present
a series of results.

\begin{lema}\label{trocandoasposicoes} Let $n\geq 1$. If $f$ is defined as
\[f=u_0[v_1,v_2]u_1[w_{\sigma(1)},w_{\sigma(2)}]u_2\cdots
u_{n}[w_{\sigma(2n-1)},w_{\sigma(2n)}]u_{n+1},\] with $u_i,v_i,w_i\in K\langle X \rangle$ for all $i$ and
 $\sigma \in S_{2n}$, then
\[f \ \equiv^A  \  (-1)^{\sigma}u_0[v_1,v_2][w_1,w_2]\cdots [w_{2n-1},w_{2n}]u_1 \cdots u_{n+1}.\]
\end{lema}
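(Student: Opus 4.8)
The plan is to prove the identity modulo $T(A)$ by reducing it to two elementary moves: relocating the factors $u_1,\dots,u_{n+1}$ to the right of all the commutators, and then reordering the commutators $[w_{\sigma(1)},w_{\sigma(2)}],\dots,[w_{\sigma(2n-1)},w_{\sigma(2n)}]$ into the standard order at the cost of the sign $(-1)^{\sigma}$. Recall that $f\equiv^A g$ means exactly $f-g\in T(A)$; since $T(A)$ is a two-sided ideal, any relation I establish may be multiplied on either side and inserted into a longer product. The engine for both moves is the following annihilation fact:
\[[v_1,v_2]\,g\,c\ \equiv^A\ 0\qquad\text{for every }g\in\KX\text{ and every commutator }c\text{ of length }\ge 3.\]

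I would prove this fact by direct evaluation in $A$, which is the conceptual core of the argument. Writing a generic element of $A$ as $\left(\begin{smallmatrix} p & q \\ 0 & r \end{smallmatrix}\right)$ with $p\in E_0$ and $q,r\in E$, the two corner maps $\left(\begin{smallmatrix} p & q \\ 0 & r \end{smallmatrix}\right)\mapsto p$ and $\left(\begin{smallmatrix} p & q \\ 0 & r \end{smallmatrix}\right)\mapsto r$ are algebra homomorphisms. Since $E_0$ is commutative, the $(1,1)$-entry of any commutator vanishes, so every value $[V_1,V_2]$ has the shape $\left(\begin{smallmatrix} 0 & \ast \\ 0 & \ast \end{smallmatrix}\right)$; since $E$ satisfies $[x_1,x_2,x_3]$, the $(2,2)$-entry of any commutator of length $\ge 3$ vanishes, so every such value $C$ is strictly upper triangular, $\left(\begin{smallmatrix} 0 & \ast \\ 0 & 0 \end{smallmatrix}\right)$. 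A direct multiplication then gives $\left(\begin{smallmatrix} 0 & \ast \\ 0 & \ast \end{smallmatrix}\right)G\left(\begin{smallmatrix} 0 & \ast \\ 0 & 0 \end{smallmatrix}\right)=0$ for every $G\in A$, which is precisely the annihilation fact.

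With this in hand, relocating the $u_i$ rests on the basic swap $[v_1,v_2]\,g\,u\,[r,s]\equiv^A[v_1,v_2]\,g\,[r,s]\,u$. Its two sides differ by $[v_1,v_2]\,g\,[u,[r,s]]$, and writing $u$ as a combination of monomials and applying the Leibniz rule $[y_1\cdots y_k,[r,s]]=\sum_j y_1\cdots y_{j-1}[y_j,[r,s]]\,y_{j+1}\cdots y_k$, each summand carries the length-three commutator $[y_j,[r,s]]=-[r,s,y_j]$; by the annihilation fact (multiplied on the right by the trailing monomial) each summand is $\equiv^A0$. Applying this step repeatedly carries $u_1,\dots,u_{n+1}$ past every commutator while preserving their relative order, so that
\[f\ \equiv^A\ u_0[v_1,v_2][w_{\sigma(1)},w_{\sigma(2)}]\cdots[w_{\sigma(2n-1)},w_{\sigma(2n)}]\,u_1\cdots u_{n+1}.\]

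It remains to show the product of commutators is alternating in $w_1,\dots,w_{2n}$; as the adjacent transpositions generate $S_{2n}$, it suffices that each contributes a factor $-1$. A swap within a pair is immediate from $[a,b]=-[b,a]$. A swap across two neighbouring pairs is handled by the stated fact that $[x_1,x_2][x_3,x_4]+[x_1,x_3][x_2,x_4]$ is a consequence of $[x_1,x_2,x_3]$: substituting the relevant $w$'s, this polynomial becomes a combination of terms each carrying a length-three commutator, so after left-multiplication by $[v_1,v_2]$ every term dies by the annihilation fact, yielding $[w_{2i-1},w_{2i}][w_{2i+1},w_{2i+2}]\equiv^A-[w_{2i-1},w_{2i+1}][w_{2i},w_{2i+2}]$ inside the product. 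Combining the two moves gives the factor $(-1)^{\sigma}$ and the claimed normal form. I expect the main obstacle to be the careful justification that this cross-pair relation may be applied \emph{locally} inside a long product flanked by the $u_i$ and the other commutators: one must expand the consequence of the triple commutator and invoke the annihilation fact term by term, leaning on $T(A)$ being a two-sided ideal. The annihilation fact itself, once proved by the matrix computation, makes the rest routine bookkeeping.
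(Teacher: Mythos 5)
Your proof is correct, and its two-move skeleton (first relocate $u_1,\dots,u_{n+1}$ to the right of all commutators, then reorder the commutators at the cost of the sign $(-1)^{\sigma}$) coincides with the paper's; what genuinely differs is the machinery justifying each move. The paper argues syntactically inside the T-ideal: relocation is done in one stroke with the identity $c[a,b]=[a,b]c-[a,b,c]$, whose error terms are visibly substitution instances of the generator $[x_1,x_2][x_3,x_4,x_5]$ of $T(A)$, and the reordering is quoted wholesale as the alternating property of products of commutators modulo $T(E)$, then lifted from $\equiv^E$ to $\equiv^A$ by left multiplication by $[v_1,v_2]$. You instead rest everything on a semantic \emph{annihilation fact} --- $[v_1,v_2]\,g\,c$ vanishes under every evaluation in $A$ whenever $c$ is a commutator of length $\geq 3$ --- proved by the block-matrix computation with the two corner homomorphisms, and your computation there is correct (commutator values in $A$ have zero $(1,1)$-entry, length-$\geq 3$ commutator values are strictly upper triangular, and the triple product is zero). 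What your route buys: it is self-contained, since membership in $T(A)$ is verified directly from the definition of $A$ rather than through consequences of the degree-five generator (whose identification with $T(A)$ rests on Lewin's theorem), and the term-by-term argument makes fully explicit the point the paper leaves implicit, namely why relations such as $[x_1,x_2][x_3,x_4]+[x_1,x_3][x_2,x_4]\in T(E)$ may legitimately be applied \emph{locally} inside a long product flanked by the $u_i$. What it costs is minor and stylistic: expanding $u$ into monomials and applying the Leibniz rule is clumsier than the paper's single identity, which moves an arbitrary polynomial past a commutator in one step, and reducing the sign to adjacent transpositions is more granular than invoking at once the known alternating property of $[w_1,w_2]\cdots[w_{2n-1},w_{2n}]$ modulo the Grassmann identities. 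Both arguments are complete and establish the lemma.
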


\begin{proof}
After using the identity
\[c[a,b]=[a,b]c-[a,b,c],\]
 $n$ times, we obtain
\begin{equation}\label{igualda}
f \ \equiv^A  \ u_0[v_1,v_2][w_{\sigma(1)},w_{\sigma(2)}]\cdots [w_{\sigma(2n-1)},w_{\sigma(2n)}]u_1\cdots u_{n+1}.
\end{equation}
Since
\[[x_1,x_2][x_3,x_4] \ \equiv^E \ -[x_1,x_3][x_2,x_4],\]
the identity
\[[w_{\sigma(1)},w_{\sigma(2)}]\cdots
[w_{\sigma(2n-1)},w_{\sigma(2n)}] \ \equiv^E \ (-1)^{\sigma}[w_1,w_2]\cdots [w_{2n-1},w_{2n}]
\] holds.
The above identity and (\ref{igualda}) imply
\[f \ \equiv^A \  (-1)^{\sigma}u_0[v_1,v_2][w_1,w_2]\cdots [w_{2n-1},w_{2n}]u_1 \cdots u_{n+1}.\]
\end{proof}

\begin{prop}\label{segunadaidentdefna}
If $n\geq 2$ is even, the polynomial
\[f=[x_1,x_2][x_3,x_4]\cdots [x_{n+3},x_{n+4}]\]
is a polynomial identity for the algebras $F_n(A)$ and $F_{n+1}(A)$.
\end{prop}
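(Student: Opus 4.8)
The goal is to show that the product of $\frac{n+4}{2}$ commutators vanishes in $F_n(A)$ and $F_{n+1}(A)$, where $n$ is even. My plan is to work inside the relatively free algebra and exploit the fact that, modulo $T(A)$, such a product of commutators behaves essentially like a product in the Grassmann algebra, where long products of commutators are controlled by the dimension of the space of generators.

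First I would observe that $f$ is a proper multilinear polynomial of degree $n+4$, so by the theory recalled in the Preliminaries (identities of unitary algebras follow from their proper multilinear ones) it suffices to check that $f$ is an identity of $F_n(A)$ and $F_{n+1}(A)$. Since $F_{n+1}(A)$ has $F_n(A)$ as a subalgebra in the relevant sense — more precisely $T(F_{n+1}(A)) \subseteq T(F_n(A))$ is false in general, so I must be careful — I would instead argue directly for both, or argue for the larger rank $n+1$ and note that any identity of $F_{n+1}(A)$ is automatically an identity of its subalgebra $F_n(A)$. Concretely, evaluating $f$ on $F_{k}(A)$ means substituting elements of $K\langle x_1,\dots,x_k\rangle$ (reduced modulo $T(A)$) into $f$; since the $x_i$ appearing in $f$ range over $x_1,\dots,x_{n+4}$ but the ambient algebra only has $k \in \{n, n+1\}$ free generators, the substituted arguments are not independent.

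The key step is the reduction afforded by Lemma~\ref{trocandoasposicoes}: modulo $T(A)$, any product of commutators can be rearranged, up to sign, into the standard ordered form $[y_1,y_2][y_3,y_4]\cdots$. Combined with the two consequences of the triple commutator recorded earlier — namely $[x_1,x_2][x_2,x_3] \equiv^E 0$ and the antisymmetry relation $[x_1,x_2][x_3,x_4] \equiv^E -[x_1,x_3][x_2,x_4]$ — this forces a product of commutators to be alternating in its entries. Hence when I evaluate $f$ on elements built from only $k$ free generators, the commutators $[\,\cdot\,,\cdot\,]$ effectively land in the commutator part, which modulo $T(A)$ is spanned by $[x_i,x_j]$ with $i<j$ drawn from the $k$ generators, and the alternating property means a product of more than $\lfloor k/2 \rfloor + 1$ such commutators must collapse. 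For $k = n$ (and similarly $k=n+1$ with $n$ even), the $\frac{n+4}{2} = \frac{n}{2}+2$ commutators in $f$ exceed the available room, so every evaluation vanishes.

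The main obstacle I anticipate is making the counting argument precise: I must show that after substituting arbitrary elements of $K\langle x_1,\dots,x_k\rangle$ into $f$ and reducing modulo $T(A) = T(E_0)T(E)$, the result is a linear combination of standard products of commutators $[x_{i_1},x_{j_1}]\cdots$ whose entries are among only $k$ variables, and that the alternating relations then annihilate any product that is ``too long.'' This is essentially the statement that $E^{(k)}$ satisfies $[x_1,x_2]\cdots[x_{2t+1},x_{2t+2}]$ for $2t \geq k$, transported through the Lewin factorization $T(A)=T(E_0)T(E)$; I would invoke the theorem on the identities of $E^{(2k)}$ from the Preliminaries to close the count. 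The delicate point is handling the leftmost factor $[x_1,x_2]$, which under $T(A)$ plays the role of the $E_0$-component and so is central — I expect this to contribute one ``extra'' commutator slot, which is exactly why the threshold is $\frac{n}{2}+2$ rather than $\frac{n}{2}+1$, and getting this bookkeeping right is where the argument must be done with care.
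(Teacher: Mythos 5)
Your outline follows the same strategy as the paper's proof --- Lemma~\ref{trocandoasposicoes}, the relations $[x_1,x_2][x_2,x_3]\equiv^E 0$ and $[x_1,x_2][x_3,x_4]\equiv^E-[x_1,x_3][x_2,x_4]$, and a pigeonhole count on the $k$ available generators with the leftmost commutator playing a special role --- but the step you yourself flag as ``the main obstacle'' is precisely the heart of the proof, and you never carry it out. After substituting $g_1,\dots,g_{n+4}\in K\langle x_1,\dots,x_k\rangle$, the commutators have arbitrary elements, not variables, as entries; your assertion that modulo $T(A)$ the commutator part ``is spanned by $[x_i,x_j]$ with $i<j$ drawn from the $k$ generators'' is false as stated, since what one actually obtains are terms of the form $u_0[x_{i_1},x_{i_2}]u_1[x_{i_3},x_{i_4}]u_2\cdots$ with monomials $u_j$ interspersed between the commutators. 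The paper closes this gap in two short moves absent from your proposal: since $f$ is multilinear, it suffices to substitute monomials $m_1,\dots,m_{n+4}$; and the identity $[a,bc]=b[a,c]+[a,b]c$ expands each $[m_i,m_j]$ into a sum of terms of the form (monomial)$\,[x_r,x_s]\,$(monomial). Only then does Lemma~\ref{trocandoasposicoes} apply, sweeping the interspersed monomials aside and sorting the entries of every commutator after the first; the $n+2$ sorted variable entries among $k\le n+1$ generators then force an adjacent repetition, and $[x_1,x_2][x_2,x_3]\equiv^E 0$ finishes (the leading commutator, being an identity of $E_0$, is what converts these mod-$T(E)$ relations into mod-$T(A)$ relations). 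Your alternative patch --- transporting the identity of $E^{(k)}$ through $T(A)=T(E_0)T(E)$ --- can indeed be made to work, but it is not automatic either: one must prove that every evaluation of $[g_3,g_4]\cdots[g_{n+3},g_{n+4}]$ in $E$ lands in a $k$-generated unital subalgebra of $E$, and that any such subalgebra kills a product of $n/2+1$ commutators (for instance because odd elements of $E$ anticommute and square to zero, so a product of more than $k$ odd parts of elements of that subalgebra vanishes). As written, ``transported through the Lewin factorization'' is a placeholder, not an argument.

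A smaller point: your worry that $T(F_{n+1}(A))\subseteq T(F_n(A))$ ``is false in general'' is misplaced --- it is true, exactly because $F_n(A)$ is naturally a subalgebra of $F_{n+1}(A)$, and the paper itself uses this inclusion in the proof of the main theorem. So your fallback (prove the statement for rank $n+1$ and deduce it for rank $n$) is legitimate; the paper instead proves the rank-$n$ case and notes that the same argument works for $F_{n+1}(A)$, since the count $n+2>k$ holds for both $k=n$ and $k=n+1$.
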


\begin{proof}
If $g\in K\langle x_1,\ldots, x_n \rangle$, denote by $\overline{g}$ the element of $F_n(A)$ given by
\[\overline{g}=g+ K\langle x_1,\ldots, x_n \rangle \cap T(A).\]
Let $m_1,\ldots,m_{n+4}$ be monomials in  $K\langle x_1,\ldots, x_n \rangle$. Since $f$ is multilinear, in order to show that $f$ is a polynomial identity
for $F_n(A)$, it is enough to prove that
\[
f(\overline{m_1},\ldots,\overline{m_{n+4}})=\overline{0},
\]
which is equivalent to prove that
\begin{equation}\label{id001}
f(m_1,\ldots,m_{n+4}) \ \equiv^A \ 0.
\end{equation}
Using the identity
\begin{equation}\label{rel1}
[a,bc]=b[a,c]+[a,b]c
\end{equation} we obtain that
\[f(m_1,\ldots,m_{n+4})\]
is a linear combination of elements of type
\[h=u_0[x_{i_1},x_{i_2}]u_1[x_{i_3},x_{i_4}]u_2 \ldots u_{t-1}[x_{i_{n+3}},x_{i_{n+4}}]u_t,\]
where $u_0,\ldots,u_t$ are monomials in $K\langle x_1,\ldots, x_n \rangle$
and $t=(n+4)/2$. By Lemma \ref{trocandoasposicoes} we have
\[h \ \equiv^A \
\pm u_0[x_{i_1},x_{i_2}][x_{j_3},x_{j_4}] \cdots [x_{j_{n+3}},x_{j_{n+4}}]u_1\cdots u_t,\] where $1\leq j_3 \leq j_4 \leq \ldots \leq j_{n+3} \leq j_{n+4}
\leq n$. Since
\[[x_1,x_2][x_2,x_3] \ \equiv^E \ 0\]
and at least two $x_{j_u}$ among $x_{j_3},x_{j_4}, \ldots, x_{j_{n+3}},x_{j_{n+4}}$ are repeated, it follows that
\[h \ \equiv^A \ 0.\]
Hence $h\in T(A)$ and the relation (\ref{id001}) is proved.

Arguing similarly one obtains that
\[[x_1,x_2][x_3,x_4]\cdots [x_{n+3},x_{n+4}]\]
is a polynomial identity for $F_{n+1}(A)$.
\end{proof}

\begin{lema}\label{identfnaea}
$f\in K\langle x_1,\ldots,x_n\rangle$ is a polynomial identity for $F_n(A)$ if and only if $f$ is a polynomial identity for $A$.
\end{lema}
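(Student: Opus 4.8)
The plan is to prove the two implications separately, with essentially all of the content residing in the forward direction. First I would dispose of the easy implication: if $f$ is a polynomial identity for $A$, then since $F_n(A)$ belongs to the variety $var(A)=\mathfrak{M}_5$, it satisfies every identity of $A$; hence $f$ is automatically a polynomial identity for $F_n(A)$. Concretely, this is the inclusion $T(A)=T(\mathfrak{M}_5)\subseteq T(F_n(A))$, and it uses only that $F_n(A)\in\mathfrak{M}_5$, not the hypothesis $f\in K\langle x_1,\ldots,x_n\rangle$.

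For the converse, suppose $f=f(x_1,\ldots,x_m)$ is a polynomial identity for $F_n(A)$ with $f\in K\langle x_1,\ldots,x_n\rangle$, so that $m\leq n$. The key step is to evaluate $f$ at the canonical generators $\overline{x_1},\ldots,\overline{x_m}$ of $F_n(A)$, using the notation $\overline{g}=g+K\langle x_1,\ldots,x_n\rangle\cap T(A)$ introduced above. Because $f$ involves only variables among $x_1,\ldots,x_n$, these generators all lie in $F_n(A)$, and the evaluation yields $f(\overline{x_1},\ldots,\overline{x_m})=\overline{f}$. Since $f$ is assumed to be an identity of $F_n(A)$, this forces $\overline{f}=\overline{0}$, which means precisely that $f\in K\langle x_1,\ldots,x_n\rangle\cap T(A)$, and in particular $f\in T(A)$. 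Therefore $f$ is a polynomial identity for $A$, completing the equivalence.

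I do not expect a genuine obstacle here: both implications are short once one observes that substituting the free generators $\overline{x_1},\ldots,\overline{x_n}$ into a polynomial from $K\langle x_1,\ldots,x_n\rangle$ simply recovers its image in the quotient $F_n(A)$. The one subtle point worth emphasizing is the role of the hypothesis $f\in K\langle x_1,\ldots,x_n\rangle$: this restriction is essential for the forward implication, since it guarantees that a distinct free generator is available for each variable of $f$. Without it, a polynomial in more than $n$ variables could be an identity of $F_n(A)$ while failing to be an identity of $A$ --- indeed, this very gap between $T(F_n(A))$ and $T(A)$ is exactly the phenomenon that the rest of the paper is devoted to measuring.
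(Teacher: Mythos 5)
Your proof is correct and is essentially the same as the paper's: the easy direction is the inclusion $T(A)\subseteq T(F_n(A))$, and the converse follows by evaluating $f$ at the canonical generators $\overline{x_1},\ldots,\overline{x_m}$ of $F_n(A)$, so that $f(\overline{x_1},\ldots,\overline{x_m})=\overline{f}=\overline{0}$ forces $f\in K\langle x_1,\ldots,x_n\rangle\cap T(A)$. Your closing remark on why the hypothesis $f\in K\langle x_1,\ldots,x_n\rangle$ is indispensable is a useful observation that the paper leaves implicit.
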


\begin{proof}
If $g\in K\langle x_1,\ldots, x_n \rangle$, denote by $\overline{g}$ the element
\[\overline{g}=g+ K\langle x_1,\ldots, x_n \rangle \cap T(A)\in F_n(A).\]
It is clear from the definition of $F_n(A)$ that $T(A)\subseteq T(F_n(A))$.

For the converse, suppose that $f(x_1,\ldots,x_n)$ is a polynomial identity for $F_n(A)$. In particular,
\[\overline{0}=f(\overline{x_1},\ldots,\overline{x_n})=\overline{f(x_1,\ldots,x_n)},\]
and $f\in T(A)$.
\end{proof}

Denote by $e_{ij}$ the element of $A$ whose $(i,j)$ entry is 1 and the remaining entries are $0$.
We recall that the generators of $E$ are denoted by
$e_1, \ e_2, \ \ldots$ \ .

The following two lemmas follow from the fact that $f_m^{(1)}$ and $f_{m,i}^{(2)}$ are not polynomial identities for $A$ and from Lemma \ref{identfnaea}.
\begin{lema}\label{fm1}
If $n\geq 2$, the polynomial
\[f_m^{(1)}=[x_2,x_1,\ldots,x_1]\]
is not a polynomial identity for $F_n(A)$.
\end{lema}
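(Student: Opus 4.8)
The plan is to reduce the statement about $F_n(A)$ to the corresponding statement about $A$ itself, for which the module decomposition of Theorem \ref{seqcarprdea} already does the work. First I observe that $f_m^{(1)}=[x_2,x_1,\ldots,x_1]$ involves only the two variables $x_1$ and $x_2$, so that $f_m^{(1)}\in K\langle x_1,\ldots,x_n\rangle$ whenever $n\geq 2$. This is precisely the hypothesis that lets me apply Lemma \ref{identfnaea}: a polynomial in $K\langle x_1,\ldots,x_n\rangle$ is an identity of $F_n(A)$ if and only if it is an identity of $A$. Hence it suffices to prove that $f_m^{(1)}\notin T(A)$.

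To establish $f_m^{(1)}\notin T(A)$, I pass to its complete multilinearization $u_m^{(1)}$. Since $\operatorname{char}K=0$, the polynomial $f_m^{(1)}$ is an identity of $A$ if and only if $u_m^{(1)}$ is. But Theorem \ref{seqcarprdea} exhibits $(KS_m)\cdot u_m^{(1)}$ as a nonzero irreducible direct summand of $\Gamma_m(A)=\Gamma_m/(\Gamma_m\cap T(A))$ for every $m\geq 2$ (the component corresponding to the partition $(m-1,1)$). Consequently $u_m^{(1)}\neq 0$ in $\Gamma_m(A)$, that is $u_m^{(1)}\notin T(A)$, and therefore $f_m^{(1)}\notin T(A)$. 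Combining this with Lemma \ref{identfnaea} yields that $f_m^{(1)}$ is not an identity of $F_n(A)$, as claimed.

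The main, and essentially the only, obstacle is knowing that $u_m^{(1)}$ survives nontrivially in $\Gamma_m(A)$, which is exactly the content of Theorem \ref{seqcarprdea}; once that decomposition is granted, the rest is bookkeeping. If one preferred a self-contained argument avoiding the representation-theoretic decomposition, one could instead produce an explicit evaluation in $A$. Writing an element of $A$ as a triple $(a,b,c)$ with $a\in E_0$ and $b,c\in E$, and using the identity $[(a,b,c),(a',b',c')]=(0,ab'+bc'-a'b-b'c,[c,c'])$, one checks that the $(2,2)$-entry of $f_m^{(1)}$ vanishes after two commutators, consistent with $E$ satisfying $[x_1,x_2,x_3]$, while the $(1,2)$-entry evolves by right multiplication by $c_1-a_1$. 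The delicate point in this alternative route is to choose the Grassmann data so that the surviving $(1,2)$-entry is still nonzero after all $m-1$ commutators: one must give $c_1-a_1$ a nonzero scalar part so that its $(m-3)$-th power does not vanish, while keeping enough odd generators in play for the initial factor to be nonzero. The abstract route via Theorem \ref{seqcarprdea} sidesteps this entirely, and is the one I would carry out.
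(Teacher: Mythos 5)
Your proof is correct, and it shares the paper's first step --- the reduction from $F_n(A)$ to $A$ via Lemma \ref{identfnaea} --- but it settles the key point $f_m^{(1)}\notin T(A)$ by a genuinely different route. The paper does this with a one-line explicit evaluation: $f_m^{(1)}(e_{22},e_{12})=e_{12}\neq 0$, since $[e_{12},e_{22}]=e_{12}$ and the commutator with $e_{22}$ can be iterated indefinitely. You instead pass to the complete multilinearization $u_m^{(1)}$ (legitimate in characteristic zero, as the paper itself recalls) and invoke Theorem \ref{seqcarprdea}, where $(KS_m)\cdot u_m^{(1)}$ appears as an irreducible, hence nonzero, summand of $\Gamma_m(A)$ for every $m\geq 2$; this is not circular, since that theorem concerns $A$ and is quoted from the literature, not proved via Lemma \ref{fm1}. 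The trade-off: your argument is pure bookkeeping once the decomposition is granted, but it leans on the heaviest imported result of the paper to certify a fact that admits a trivial direct witness; the paper's evaluation is elementary, self-contained, and independent of the representation theory. One small correction to your closing remarks: the direct route is less delicate than you suggest. No Grassmann generators are needed at all --- the substitution $x_1=e_{22}$, $x_2=e_{12}$ uses only scalar matrix entries, the factor $c_1-a_1$ in your notation is just $1$, and the $(1,2)$-entry stays equal to $e_{12}$ at every stage, so there is no power that could vanish.
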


\begin{proof}
Since
\[f_m^{(1)}(e_{22},e_{12})=e_{12},\]
$f_m^{(1)}$ is not a polynomial identity for $A$. The result now follows from Lemma \ref{identfnaea}.
\end{proof}

\begin{lema}\label{fm2}
 If $n\geq 2$ is even, the polynomial
 \[f_{m,n}^{(2)}=[x_2,x_1,\dots,x_1][x_1,x_2]\cdots [x_{n-1},x_{n}]\]
is not a polynomial identity for $F_n(A)$.
\end{lema}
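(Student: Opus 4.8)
The plan is to reduce the assertion to a statement about $A$ and then produce an explicit evaluation witnessing a nonidentity. Every variable occurring in $f_{m,n}^{(2)}=[x_2,x_1,\dots,x_1][x_1,x_2]\cdots[x_{n-1},x_n]$ lies among $x_1,\dots,x_n$, so $f_{m,n}^{(2)}\in K\langle x_1,\dots,x_n\rangle$, and Lemma \ref{identfnaea} reduces the problem to showing that $f_{m,n}^{(2)}$ is \emph{not} a polynomial identity of $A$. One could finish immediately by citing Theorem \ref{seqcarprdea}: the complete multilinearization $u_{m,n}^{(2)}$ generates a nonzero irreducible summand of $\Gamma_m(A)$ (here $n$ is even with $2\le n\le m-2$, since the first commutator has degree $m-n\ge 2$), and in characteristic zero a polynomial is an identity precisely when its complete multilinearization is. I prefer, however, a concrete substitution in the spirit of the proof of Lemma \ref{fm1}.

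Next I would record how commutators act on $A$. Writing an element as $\left(\begin{smallmatrix}a&b\\0&c\end{smallmatrix}\right)$ with $a\in E_0$ and $b,c\in E$, centrality of $E_0$ forces every commutator to have zero $(1,1)$-entry, its $(2,2)$-entry being the Grassmann commutator of the lower-right entries; consequently a product of commutators again has zero first column below the diagonal, and its $(2,2)$-entry is the \emph{product} of those Grassmann commutators. Choosing the lower-right entries of $x_1,\dots,x_n$ among distinct odd generators, the tail $M=[x_1,x_2][x_3,x_4]\cdots[x_{n-1},x_n]$ acquires lower-right entry $[c_1,c_2][c_3,c_4]\cdots[c_{n-1},c_n]$, which equals $2^{n/2}e_1e_2\cdots e_n\neq 0$ once we take $c_1=e_1,\dots,c_n=e_n$ (using $[e_{2k-1},e_{2k}]=2e_{2k-1}e_{2k}$).

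The delicate point is the iterated commutator $P=[x_2,x_1,\dots,x_1]$, with $p=m-n-1\ge 1$ copies of $x_1$, because $x_1$ and $x_2$ feed \emph{both} $P$ and the tail $M$. The naive choice $x_1=e_{22}$, $x_2=e_{12}$ (which forces $P=e_{12}$) fails, since then $[x_1,x_2]=-e_{12}$ and $e_{12}^2=0$ collapses the product; and taking the lower-right entry of $x_1$ to be a single odd generator $e_1$ also fails, because $e_1^2=0$ kills $P$ once $p\ge 2$. The resolution — and what I expect to be the main obstacle to spot — is to attach a central summand: set
\[
x_1\mapsto\begin{pmatrix}0&0\\0&1+e_1\end{pmatrix},\quad
x_2\mapsto\begin{pmatrix}0&1\\0&e_2\end{pmatrix},\quad
x_j\mapsto\begin{pmatrix}0&0\\0&e_j\end{pmatrix}\ (3\le j\le n).
\]
The summand $1$ keeps $P$ alive while only the $e_1$-part contributes to $[c_1,c_2]=[e_1,e_2]$. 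A short induction (using $e_1^2=0$, the vanishing of triple commutators in $E$, and $b_1=0$) gives that $P$ has $(1,2)$-entry $(1+e_1)^p=1+pe_1$, whence the evaluated $f_{m,n}^{(2)}=P\cdot M$ has $(1,2)$-entry
\[
(1+pe_1)\,2^{n/2}e_1e_2\cdots e_n = 2^{n/2}e_1e_2\cdots e_n\neq 0,
\]
the $e_1$-term being annihilated. Thus $f_{m,n}^{(2)}\notin T(A)$, and by Lemma \ref{identfnaea} it is not a polynomial identity of $F_n(A)$.
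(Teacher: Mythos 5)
Your proposal is correct and is essentially the paper's own proof: the same reduction to $A$ via Lemma \ref{identfnaea}, followed by the identical substitution $x_1\mapsto(1+e_1)e_{22}$, $x_2\mapsto e_{12}+e_2e_{22}$, $x_j\mapsto e_je_{22}$ for $3\le j\le n$, yielding the nonzero value $2^{n/2}e_1e_2\cdots e_ne_{12}$. The only difference is presentational: the paper computes the case $n=2$ first and then appends the factors $[x_3,x_4]\cdots[x_{n-1},x_n]$, whereas you track the matrix entries directly for general $n$ (and note, in passing, the one-line alternative via Theorem \ref{seqcarprdea}).
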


\begin{proof}
We first consider $n=2$. In
this case,
 \[f_{m,2}^{(2)}=[x_2,x_1,\dots,x_1][x_1,x_2].\]
 By substituting $x_1=(1+e_1)e_{22}$ and $x_2=e_{12}+e_2e_{22}$ we obtain
 \[f_{m,2}^{(2)}((1+e_1)e_{22},e_{12}+e_2e_{22})=2e_1e_2e_{12}.
\]

Let us now look at $f_{m,n}^{(2)}$ for an arbitrary even number $n$. Since
\[f_{m,n}^{(2)}=f_{m,2}^{(2)}[x_3,x_4]\cdots [x_{n-1},x_n],\]
direct calculations show that
\[f_{m,n}^{(2)}((1+e_1)e_{22},e_{12}+e_2e_{22},e_3e_{22},\ldots,e_ne_{22})=2^{n/2}e_1e_2e_3\cdots e_ne_{12}.\]
Hence, $f_{m,n}^{(2)}$ is not a polynomial identity for $F_n(A)$.
\end{proof}

\begin{lema}\label{lemfmn3}
 If $n\geq 2$ is even, the polynomial
 \[f_{m,n+1}^{(3)}=\sum_{\sigma\in S_{n+1}} (-1)^{\sigma} [x_{\sigma(1)},x_1,\dots,x_1]
 [x_{\sigma(2)},x_{\sigma(3)}]\ldots [x_{\sigma(n)},x_{\sigma(n+1)}]\]
is not a polynomial identity for $F_n(A)$.
\end{lema}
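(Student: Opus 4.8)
The plan is to exhibit a single nonzero evaluation on $F_n(A)$. The new feature, absent from Lemmas \ref{fm1} and \ref{fm2}, is that $f_{m,n+1}^{(3)}$ uses the $n+1$ variables $x_1,\ldots,x_{n+1}$, so it lies outside $K\langle x_1,\ldots,x_n\rangle$ and Lemma \ref{identfnaea} does not apply; since $A$ has infinite basic rank, a nonzero value on $A$ by itself says nothing about $F_n(A)$. By the reduction underlying Proposition \ref{segunadaidentdefna}, the statement that $f_{m,n+1}^{(3)}$ is \emph{not} an identity of $F_n(A)$ is equivalent to the existence of $p_1,\ldots,p_{n+1}\in K\langle x_1,\ldots,x_n\rangle$ with $f_{m,n+1}^{(3)}(p_1,\ldots,p_{n+1})\not\equiv^{A}0$, and the last relation I would verify by a substitution of elements of $A$.

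I would reuse the generators of Lemma \ref{fm2}, namely $a_1=(1+e_1)e_{22}$, $a_2=e_{12}+e_2e_{22}$ and $a_j=e_je_{22}$ for $3\le j\le n$. Taking $p_j=x_j$ for $j\le n$ and $p_{n+1}=(2x_1-x_1^2)x_2$, the point is that $2a_1-a_1^2=e_{22}$ and $e_{22}a_2=e_2e_{22}$, so $p_{n+1}$ evaluates to $e_2e_{22}$. Thus all $n+1$ evaluation points lie in the subalgebra generated by $a_1,\ldots,a_n$, so the substitution descends to $F_n(A)$, while $x_{n+1}$ acquires the odd diagonal entry $e_2$ — recycled from $a_2$ — that the computation requires.

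The evaluation I would run through the matrix arithmetic of $A$: all values have zero $(1,1)$-entry, a commutator $\bigl[\left(\begin{smallmatrix}0&b\\0&c\end{smallmatrix}\right),\left(\begin{smallmatrix}0&b'\\0&c'\end{smallmatrix}\right)\bigr]$ has $(1,2)$-entry $bc'-b'c$ and $(2,2)$-entry $[c,c']$, and in a product of such matrices only the leading $(1,2)$-entry survives, multiplied by the $(2,2)$-entries of the remaining factors. Hence the long commutator $[x_{\sigma(1)},x_1,\ldots,x_1]$ (with $k=m-n-1\ge 1$ copies of $x_1$) has $(1,2)$-entry $b_{\sigma(1)}(1+e_1)^k$, which is nonzero only when $\sigma(1)=2$; for such $\sigma$ the trailing commutators produce $2^{n/2}$ times a product of the $n$ distinct odd elements $e_1,\ldots,e_n$ (these being the diagonal entries of $x_1,x_3,\ldots,x_n,x_{n+1}$, so that $e_2$ enters through $x_{n+1}$). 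Because the sign $(-1)^{\sigma}$ differs from the sign of that rearrangement by a constant over $\{\sigma:\sigma(1)=2\}$, the $n!$ surviving terms add up, giving $(1,2)$-entry $\pm n!\,2^{n/2}(1+e_1)^k\,e_1e_2\cdots e_n=\pm n!\,2^{n/2}\,e_1e_2\cdots e_n\neq 0$ (since $e_1^2=0$), so $f_{m,n+1}^{(3)}(p_1,\ldots,p_{n+1})\not\equiv^{A}0$.

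The real obstacle is conceptual rather than computational: because $f_{m,n+1}^{(3)}$ has one more argument than the rank of $F_n(A)$, the evaluation must be forced to descend to the relatively free algebra, and the device $p_{n+1}=(2x_1-x_1^2)x_2$ — which fabricates a fresh odd diagonal entry from the generators already present — is what makes this possible. The accompanying sign count in the alternating sum is routine but delicate, since mishandling it would make the $n!$ terms cancel.
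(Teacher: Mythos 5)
Your proposal is correct, and it takes a genuinely different route from the paper. The paper argues by contradiction: assuming $f_{m,n+1}^{(3)}\in T(F_n(A))$, it first collapses the alternating sum modulo $T(A)$ via Lemma \ref{trocandoasposicoes}, then applies the substitution $x_{n+1}\mapsto x_1x_2$ (legitimate because $T(F_n(A))$ is a T-ideal) and shows by commutator manipulations modulo $T(A)$ that the result is $\equiv^A -f_{m,n}^{(2)}$, contradicting Lemma \ref{fm2}. You instead work forward: using the correct criterion that non-vanishing on $F_n(A)$ means finding $p_1,\dots,p_{n+1}\in K\langle x_1,\dots,x_n\rangle$ with $f_{m,n+1}^{(3)}(p_1,\dots,p_{n+1})\notin T(A)$, you recycle the witness elements of Lemma \ref{fm2} and fabricate the extra argument via $p_{n+1}=(2x_1-x_1^2)x_2$, which evaluates to $e_2e_{22}$ since $2a_1-a_1^2=e_{22}$; then you carry out the matrix--Grassmann computation explicitly. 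I checked the key points: only terms with $\sigma(1)=2$ contribute to the $(1,2)$-entry (the long commutator's $(1,2)$-entry is indeed $b_{\sigma(1)}(1+e_1)^k$); each trailing commutator $[c_p,c_q]$ equals $2\tilde e_p\tilde e_q$ with the constant $1$ in $c_1=1+e_1$ dropping out; and $(-1)^\sigma$ times the Grassmann rearrangement sign is constant on the coset $\{\sigma:\sigma(1)=2\}$ (both flip under a transposition of positions $2,\dots,n+1$), so the $n!$ terms reinforce rather than cancel, giving $(1,2)$-entry $\pm n!\,2^{n/2}e_1\cdots e_n\neq 0$. In essence both proofs rest on the same idea --- specialize $x_{n+1}$ to a polynomial in $x_1,\dots,x_n$ built from the Lemma \ref{fm2} data --- but the paper's version outsources the hard part to previously established T-ideal identities and avoids any explicit sign count, while yours is self-contained, produces an explicit nonzero value of the polynomial, and makes visible exactly why the alternating sum survives; the cost is that the sign bookkeeping, which the paper's Lemma \ref{trocandoasposicoes} handles abstractly, must be done by hand.
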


\begin{proof}
Suppose that $f_{m,n+1}^{(3)}$ is a polynomial identity for $F_n(A)$. By Lemma \ref{trocandoasposicoes}, we have
\[f_{m,n+1}^{(3)} \ \equiv^A \ g,\]
where
\[g=(n!) \sum_{i=2}^{n+1} (-1)^{i+1} [x_i,x_1,\dots,x_1]
 [x_1,x_2][x_3,x_4]\ldots \widehat{x_i} \ldots [x_n,x_{n+1}].\]
Here $\widehat{x_i}$ means that the variable $x_i$ occurs only in the first commutator, i.e., in $[x_i,x_1,\dots,x_1]$. Since $T(A) \subset T(F_n(A))$, it
follows that $g$ is also a polynomial identity for $F_n(A)$.
In particular,
\[g'=(1/n!)g(x_1,x_2,\ldots,x_n,x_1x_2) \in T(F_n(A)).\]
We have
\begin{eqnarray*}
 g'&\equiv^A& -[x_2,x_1,\dots,x_1]
 [x_1,x_3][x_4,x_5]\cdots  [x_n,x_1x_2] \\
&& +[x_1x_2,x_1,\dots,x_1]
 [x_1,x_2][x_3,x_4]\cdots  [x_{n-1},x_{n}]\\
&\equiv^A& -[x_2,x_1,\dots,x_1]
 [x_1,x_3][x_4,x_5]\cdots  x_1[x_n,x_2] \\
& & +x_1[x_2,x_1,\dots,x_1]
 [x_1,x_2][x_3,x_4]\cdots  [x_{n-1},x_{n}]\\
&\equiv^A& -[x_2,x_1,\dots,x_1]x_1
 [x_1,x_3][x_4,x_5]\cdots  [x_n,x_2] \\
& & +x_1[x_2,x_1,\dots,x_1]
 [x_1,x_2][x_3,x_4]\cdots  [x_{n-1},x_{n}]\\
&\equiv^A& -[x_2,x_1,\dots,x_1]x_1
 [x_1,x_2][x_3,x_4]\cdots  [x_{n-1},x_{n}] \\
&& +x_1[x_2,x_1,\dots,x_1]
 [x_1,x_2][x_3,x_4]\cdots  [x_{n-1},x_{n}]\\
&\equiv^A& -[x_2,x_1,\dots,x_1,x_1]
 [x_1,x_2][x_3,x_4]\cdots  [x_{n-1},x_{n}] \\
&\equiv^A& - f_{m,n}^{(2)}.
 \end{eqnarray*}
Thus $f_{m,n}^{(2)}$ is also a polynomial identity for $F_n(A)$. This contradicts Lemma \ref{fm2}.
\end{proof}

\begin{lema}\label{fm4}
If $n\geq 2$ is even, the polynomial
\[f_{m,n+1}^{(4)}=\sum_{\sigma\in S_{n+1}} (-1)^{\sigma}
[x_2,x_1,x_{\sigma(1)},x_1,\dots,x_1][x_{\sigma(2)},x_{\sigma(3)}]\ldots [x_{\sigma(n)},x_{\sigma(n+1)}]\] is not a polynomial identity for $F_n(A)$.
\end{lema}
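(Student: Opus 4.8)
The plan is to argue by contradiction in the spirit of Lemma \ref{lemfmn3}. Assuming $f_{m,n+1}^{(4)}\in T(F_n(A))$, I would substitute for $x_{n+1}$ a word in $x_1,\dots,x_n$, obtaining a polynomial in $n$ variables that still lies in the T-ideal $T(F_n(A))$, and then reduce it modulo $T(A)$ to a nonzero scalar multiple of some $f_{m',n}^{(2)}$; by Lemma \ref{identfnaea} this would force $f_{m',n}^{(2)}\in T(A)$, contradicting Lemma \ref{fm2}.

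The decisive and non-obvious step is the choice of word for $x_{n+1}$, and here one must depart from Lemma \ref{lemfmn3}. The substitution $x_{n+1}\mapsto x_1x_2$ does \emph{not} succeed: a direct computation (already for $n=2$, where $f_{m,3}^{(4)}(x_1,x_2,x_1x_2)\equiv^A 2\,[x_2,x_1,x_2,x_1,\dots,x_1][x_2,x_1]$) produces a long commutator carrying a spurious second $x_2$, and since only $n$ generators are available the corresponding Grassmann word is forced to repeat a generator, so the result already lies in $T(A)$ and gives nothing. Instead I would take $x_{n+1}\mapsto x_1^2$. Its virtue is that it annihilates the offending terms through relations such as $[x_1,x_1^2]=0$, clearing away the contributions that would otherwise insert a second $x_2$ into the long commutator.

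To carry out the reduction I would apply Lemma \ref{trocandoasposicoes} to bring each trailing product of length-two commutators into increasing order, and then simplify using the Leibniz rule $[a,bc]=b[a,c]+[a,b]c$, the relation $c[a,b]=[a,b]c-[a,b,c]$, and the fact that in $A$ a product of two commutators vanishes whenever its right-hand factor has length at least $3$. For $n=2$ this yields $f_{m,3}^{(4)}(x_1,x_2,x_1^2)\equiv^A -2\,[x_2,x_1,\dots,x_1][x_1,x_2]=-2\,f_{m+1,2}^{(2)}$, and for general even $n$ the surviving terms should telescope to a nonzero multiple of $f_{m+1,n}^{(2)}$, the substitution having raised the degree by one. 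I expect this last reduction to be the main obstacle: one must keep track of the signs arising both from the reordering in Lemma \ref{trocandoasposicoes} and from the iterated Leibniz expansions, and verify that every cross-term in which $x_1^2$ lands inside a trailing commutator either dies by the length-$3$ rule or cancels against a partner, leaving only the multiple of $f_{m+1,n}^{(2)}$.
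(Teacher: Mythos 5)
Your argument is essentially the paper's own proof: the paper likewise assumes $f_{m,n+1}^{(4)}\in T(F_n(A))$, passes modulo $T(A)$ to the alternating sum $(n!)\sum_{i=1}^{n+1}(-1)^{i+1}[x_2,x_1,x_i,x_1,\dots,x_1][x_1,x_2]\cdots\widehat{x_i}\cdots[x_n,x_{n+1}]$, substitutes $x_{n+1}\mapsto x_1^2$, and reduces to $-[x_2,x_1,x_1,\dots,x_1,x_1][x_1,x_2]\cdots[x_{n-1},x_n]$, contradicting Lemma \ref{fm2}. The telescoping you anticipate does go through exactly as you expect: the terms with $2\le i\le n$ die because after expanding $[\,\cdot\,,x_1^2]$ the tail is a product of length-two commutators with $x_1$ repeated, hence lies in $T(E)$, and $(\text{commutator})\cdot T(E)\subseteq T(E_0)T(E)=T(A)$, while the $i=1$ and $i=n+1$ terms combine (the reordering sign is $-1$ since $n$ is even) into $-[\,[x_2,x_1,\dots,x_1][x_1,x_2]\cdots[x_{n-1},x_n],x_1\,]\equiv^A -f_{m+1,n}^{(2)}$.

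One aside in your proposal is false, although it does not affect the proof. Your computation $f_{m,3}^{(4)}(x_1,x_2,x_1x_2)\equiv^A 2[x_2,x_1,x_2,x_1,\dots,x_1][x_2,x_1]$ is correct, but this polynomial does \emph{not} lie in $T(A)$: evaluating at $x_1=(1+e_1)e_{22}$ and $x_2=e_{12}+(1+e_2)e_{22}$ gives $2e_2e_1e_{12}\neq 0$. (Membership in $T(A)$ is a property of the polynomial itself, here a polynomial in two variables; the number of generators of $F_n(A)$ plays no role in that question.) So the substitution $x_{n+1}\mapsto x_1x_2$ would in fact also yield a contradiction; the genuine advantage of $x_1^2$ is that its output is literally $f_{m+1,n}^{(2)}$, so Lemma \ref{fm2} applies verbatim and no extra evaluation argument is needed.
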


\begin{proof}
Suppose that $f_{m,n+1}^{(4)}$ is an identity for $F_n(A)$. Since the polynomial $f_{m,n+1}^{(4)}$ is equivalent modulo $T(A)$ to
\[g=(n!) \sum_{i=1}^{n+1} (-1)^{i+1} [x_2,x_1,x_i,x_1,\dots,x_1]
 [x_1,x_2][x_3,x_4]\ldots \widehat{x_i} \ldots [x_n,x_{n+1}],\]
we obtain that $g$ is also an identity for $F_n(A)$. In particular,
\[h=(1/n!) g(x_1,x_2,\ldots,x_n,x_1^2) \in T(F_n(A))\]
and then
\begin{eqnarray*}
h&\equiv^A&+
[x_2,x_1,x_1,x_1,\ldots,x_1][x_2,x_3][x_4,x_5]\ldots [x_n,x_1x_1]+\\
&&+[x_2,x_1,x_1x_1,x_1,\dots,x_1][x_1,x_2][x_3,x_4]\ldots [x_{n-1},x_n]\\
&\equiv^A&+2[x_2,x_1,x_1,x_1,\ldots,x_1][x_2,x_3][x_4,x_5]\ldots [x_n,x_1]x_1+\\
&&+[x_2,x_1,x_1x_1,x_1,\dots,x_1][x_1,x_2][x_3,x_4]\ldots [x_{n-1},x_n]\\
&\equiv^A&-2[x_2,x_1,x_1,x_1,\ldots,x_1][x_1,x_2][x_3,x_4]\ldots [x_{n-1},x_n]x_1+\\
&&+x_1[x_2,x_1,x_1,x_1,\dots,x_1][x_1,x_2][x_3,x_4]\ldots [x_{n-1},x_n]+\\
&&+  [x_2,x_1,x_1,x_1,\dots,x_1][x_1,x_2][x_3,x_4]\ldots [x_{n-1},x_n]x_1\\
&\equiv^A&-[[x_2,x_1,x_1,x_1,\ldots,x_1][x_1,x_2][x_3,x_4]\ldots [x_{n-1},x_n],x_1]\\
&\equiv^A&-[x_2,x_1,x_1,x_1,\ldots,x_1,x_1][x_1,x_2][x_3,x_4]\ldots [x_{n-1},x_n].\\
\end{eqnarray*}
This contradicts Lemma \ref{fm2}.
\end{proof}

\begin{lema}\label{fm5}
If $n\geq 2$ is even, the polynomial
 \[
 f_{m,n+2}^{(5)}=\sum_{\sigma\in S_{n+2}} (-1)^{\sigma} [x_{\sigma(1)},x_{\sigma(2)},x_1,\dots,x_1]
 [x_{\sigma(3)},x_{\sigma(4)}]\cdots
[x_{\sigma(n+1)},x_{\sigma(n+2)}]
 \]
is not a polynomial identity for $F_n(A)$.
\end{lema}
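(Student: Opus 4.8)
The plan is to mimic the strategy used in Lemmas~\ref{lemfmn3} and~\ref{fm4}: assume $f_{m,n+2}^{(5)}$ is an identity for $F_n(A)$, apply Lemma~\ref{trocandoasposicoes} to rewrite it in a normalized form where all but the leading commutator are short commutators in ascending order, then perform a clever substitution that collapses the resulting alternating sum into (a scalar multiple of) $f_{m,n}^{(2)}$, contradicting Lemma~\ref{fm2}. First I would use Lemma~\ref{trocandoasposicoes} to move the factor $[x_{\sigma(1)},x_{\sigma(2)},x_1,\dots,x_1]$ to the front and reorder the remaining commutators, obtaining that $f_{m,n+2}^{(5)}$ is equivalent modulo $T(A)$ to a constant multiple of
\[
g=\sum (-1)^{i+j}\,[x_i,x_j,x_1,\dots,x_1]\,[x_2,x_3][x_4,x_5]\cdots\widehat{x_i}\cdots\widehat{x_j}\cdots[x_{n+1},x_{n+2}],
\]
summed over the appropriate pairs $\{i,j\}$ of the $n+2$ leading variables, where the hats indicate which variables have been pulled into the leading commutator.

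Next I would apply an evaluation that identifies enough variables to force the short commutators to pair up correctly. Since the leading commutator $[x_{\sigma(1)},x_{\sigma(2)},x_1,\dots,x_1]$ now has \emph{two} free variables (unlike the single free variable in cases~(3) and~(4)), the natural substitution is something like sending two of the variables to a product such as $x_1x_2$ or $x_1^2$, and then using the Leibniz-type identities \eqref{rel1} together with $c[a,b]=[a,b]c-[a,b,c]$ repeatedly to absorb the extra factor into the leading commutator, lengthening it by one. The goal is that after the dust settles, $g$ evaluated at the chosen arguments reduces modulo $T(A)$ to $\pm f_{m,n}^{(2)}$, exactly as the previous two lemmas terminated. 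Because $n$ is even, the parities in the table guarantee that $f_{m,n}^{(2)}$ is the relevant obstruction, so a contradiction with Lemma~\ref{fm2} closes the argument.

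The main obstacle I anticipate is bookkeeping the alternating signs and the combinatorics of which pair $\{i,j\}$ survives the substitution. In cases~(3) and~(4) only one variable sat in the leading commutator, so a single substitution $x_{n+1}\mapsto x_1x_2$ (resp.\ $x_1^2$) sufficed; here two variables are antisymmetrized in the head, so I expect to need either a double substitution or a more careful choice that breaks the symmetry so that a single term dominates. Verifying that all the unwanted terms cancel—either because they contain a repeated short commutator (killed by $[x_1,x_2][x_2,x_3]\equiv^E 0$) or because they cancel pairwise through the antisymmetry—will be the delicate computational heart of the proof. Once the reduction to $f_{m,n}^{(2)}$ is established, the conclusion is immediate from Lemma~\ref{fm2}.
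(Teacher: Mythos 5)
Your overall skeleton is the same as the paper's: assume $f_{m,n+2}^{(5)}\in T(F_n(A))$, normalize with Lemma~\ref{trocandoasposicoes} (your $g$, summed over pairs $i<j$, matches the paper's normalized form up to an irrelevant global sign), then substitute to collapse onto a previously established non-identity. But there is a genuine gap, and it sits exactly where you flag the ``delicate computational heart'': you never pin down the substitution, never carry out the reduction, and you aim at the wrong target. Structurally, one collapsing substitution cannot produce $f_{m,n}^{(2)}$: the head commutator of $f_{m,n+2}^{(5)}$ has \emph{two} alternated positions, and a single evaluation such as $x_{n+2}\mapsto x_1^2$ removes only one of them, so what survives is a polynomial whose head still contains one alternated variable --- that is the shape of $f_{m,n+1}^{(3)}$, not of $f_{m,n}^{(2)}$. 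This is precisely the paper's route: it sets $h=g(x_1,\dots,x_{n+1},x_1^2)$, shows by a commutator computation that
\[
h \ \equiv^A \ -(1/n!)\,f_{m,n+1}^{(3)},
\]
and gets a contradiction with Lemma~\ref{lemfmn3} (not Lemma~\ref{fm2}). The further descent from $f_{m,n+1}^{(3)}$ to $f_{m,n}^{(2)}$ is already encapsulated in the proof of Lemma~\ref{lemfmn3}, so there is no need to redo it here.

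Your plan of reaching $f_{m,n}^{(2)}$ directly would require two simultaneous collapsing substitutions (effectively composing the paper's Lemma~\ref{fm5} step with the proof of Lemma~\ref{lemfmn3}), and the sign/cancellation bookkeeping you defer is strictly harder in that combined form: after the first collapse the cross terms only cancel because of the identity $[x,[y,x_1,\dots,x_1]]\equiv$ a longer commutator modulo $T(A)$, and verifying this with a second substituted variable in play is the entire content you have omitted. So the missing idea is twofold: (i) the decisive computation is absent, and (ii) the available ladder of lemmas means you should stop one rung earlier, at $f_{m,n+1}^{(3)}$, where a single substitution suffices.
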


\begin{proof}
Suppose that $f_{m,n+2}^{(5)}$ is an identity for $F_n(A)$ and define
\[g=(2/n!)f_{m,n+2}^{(5)}.\]
Then we have
\begin{eqnarray*}
g&\equiv^A&\sum_{i<j} (-1)^{j-i+1} [x_i,x_j,x_1,\dots,x_1][x_1,x_2] \ldots \widehat{x_i} \ldots
 \widehat{x_j} \ldots [x_{n+1},x_{n+2}].
\end{eqnarray*}
Define $h=g(x_1,\ldots,x_{n+1},x_1^2)$. Note that $h$ is a polynomial identity for $F_n(A)$. We have
\begin{eqnarray*}
h&\equiv^A&+\sum_{j=2}^{n+1} (-1)^{j} [x_1,x_j,x_1,\dots,x_1][x_2,x_3] \ldots
 \widehat{x_j} \ldots [x_{n+1},x_1x_1]+\\
&&+\sum_{i=2}^{n+1} (-1)^{-i+1} [x_i,x_1x_1,x_1,\dots,x_1][x_1,x_2] \ldots \widehat{x_i} \ldots [x_{n},x_{n+1}]\\
&\equiv^A&+2\sum_{j=2}^{n+1} (-1)^{j} [x_1,x_j,x_1,\dots,x_1][x_2,x_3] \ldots
 \widehat{x_j} \ldots [x_{n+1},x_1]x_1+\\
&&+\sum_{i=2}^{n+1} (-1)^{-i+1} x_1[x_i,x_1,x_1,\dots,x_1][x_1,x_2] \ldots \widehat{x_i} \ldots [x_{n},x_{n+1}]+\\
&&+\sum_{i=2}^{n+1} (-1)^{-i+1} [x_i,x_1,x_1,\dots,x_1]x_1[x_1,x_2] \ldots \widehat{x_i} \ldots [x_{n},x_{n+1}].\\
 \end{eqnarray*}
And also
\begin{eqnarray*}
h&\equiv^A&+2\sum_{j=2}^{n+1} (-1)^{j} [x_1,x_j,x_1,\dots,x_1][x_2,x_3] \ldots
 \widehat{x_j} \ldots [x_{n+1},x_1]x_1+\\
&&+2\sum_{i=2}^{n+1} (-1)^{-i+1} [x_i,x_1,x_1,\dots,x_1][x_1,x_2] \ldots \widehat{x_i} \ldots [x_{n},x_{n+1}]x_1+\\
&&+\sum_{i=2}^{n+1} (-1)^{-i+1} [x_1,[x_i,x_1,x_1,\dots,x_1]][x_1,x_2] \ldots \widehat{x_i} \ldots [x_{n},x_{n+1}].\\
&\equiv^A&+\sum_{i=2}^{n+1} (-1)^{-i+1} [x_1,[x_i,x_1,x_1,\dots,x_1]][x_1,x_2] \ldots \widehat{x_i} \ldots [x_{n},x_{n+1}].\\
&\equiv^A&+\sum_{i=2}^{n+1} (-1)^{i} [x_i,x_1,x_1,\dots,x_1,x_1][x_1,x_2] \ldots \widehat{x_i} \ldots [x_{n},x_{n+1}].\\
&\equiv^A&-(1/n!)f_{m,n+1}^{(3)}.
\end{eqnarray*}
Hence $f_{m,n+1}^{(3)}$ would be an identity for $F_n(A)$. A contradiction to Lemma \ref{lemfmn3}.
\end{proof}

We are now in conditions to prove {\bf Theorem \ref{teoremaprincipal}}.

\begin{proof}[Theorem \ref{teoremaprincipal} ]
If $n\geq 2$ is even, we will decompose $\Gamma_m(F_n(A))$ as a sum of irreducible $S_m$-modules. Since $T(A)\subset T(F_n(A))$, $\Gamma_m(F_n(A))$ is a
homomorphic image of $\Gamma_m(A)$, and its decomposition is given by the same decomposition of $\Gamma_m(A)$ except for the components generated by
polynomials which are identities of $F_n(A)$. We now determine which of these generators are identities of $F_n(A)$.

Observe that $u_m^{(1)}\in T(F_n(A))$ if and only if $f_m^{(1)} \in T(F_n(A))$, since the former is a multilinearization of the latter. The same holds for
$u_{m,i}^{(j)}$, i.e., $u_{m,i}^{(j)} \in T(F_n(A))$ if and only if $f_{m,i}^{(j)} \in T(F_n(A))$.

\begin{enumerate}
 \item Analyzing  $(KS_m)\cdot u_m^{(1)}$.

By Lemma \ref{fm1}, $(KS_m)\cdot u_m^{(1)}$ is a nonzero component of $\Gamma_m(F_n(A))$.

\item Analyzing $(KS_m) \cdot u_{m,i}^{(2)}$.

a) If $i \leq n$, then $(KS_m) \cdot u_{m,i}^{(2)}$ is a nonzero component of $\Gamma_m(F_n(A))$. Indeed, if for some $i \leq n$, $u_{m,i}^{(2)} \in
T(F_n(A))$ then $f_{m,i}^{(2)} \in T(F_n(A))$. Hence
\[f_{m,n}^{(2)}=f_{m,i}^{(2)}[x_{i+1},x_{i+2}]\ldots [x_{n-1},x_{n}] \in T(F_n(A)).\]
And this contradicts Lemma \ref{fm2}.

b) If $i \geq n+2$,
\[f_{m,i}^{(2)} \in \langle [x_1,x_2]\ldots [x_{n+3},x_{n+4}] \rangle ^T,\] and it follows from Proposition \ref{segunadaidentdefna} that
$f_{m,i}^{(2)} \in T(F_n(A))$. Then the $S_m$-module $(KS_m) \cdot u_{m,i}^{(2)}$ vanishes in $\Gamma_m(F_n(A))$.

\item Analyzing $(KS_m) \cdot u_{m,i}^{(3)}$.

a) If $i \leq n+1$, then $(KS_m) \cdot u_{m,i}^{(3)}$ is a nonzero component of $\Gamma_m(F_n(A))$. Indeed, if for some $i \leq n+1$, $u_{m,i}^{(3)} \in
T(F_n(A))$ then $f_{m,i}^{(3)} \in T(F_n(A))$. Hence
\[f_{m,n+1}^{(3)} \in \langle f_{m,i}^{(3)} \rangle^T \subset T(F_n(A)).\]
And this contradicts Lemma \ref{lemfmn3}.

b) If $i \geq n+3$,
\[f_{m,i}^{(3)} \in \langle [x_1,x_2]\ldots [x_{n+3},x_{n+4}] \rangle ^T,\] and it follows from Proposition \ref{segunadaidentdefna} that $f_{m,i}^{(3)} \in T(F_n(A))$.
Then the $S_m$-module $(KS_m) \cdot u_{m,i}^{(3)}$ vanishes in  $\Gamma_m(F_n(A))$.

\item Analyzing $(KS_m) \cdot u_{m,i}^{(4)}$.

a) If $i \leq n+1$, then $(KS_m) \cdot u_{m,i}^{(4)}$ is a nonzero component of $\Gamma_m(F_n(A))$. To see
this, use the same argument used in item a) of the above case and Lemma \ref{fm4}.

b) If $i \geq n+3$, then $(KS_m) \cdot u_{m,i}^{(4)}$ vanishes in $\Gamma_m(F_n(A))$. To see
this, use the same argument used in item b) of the above case

\item Analyzing $(KS_m) \cdot u_{m,i}^{(5)}$.

a) If $i \leq n+2$, then $(KS_m) \cdot u_{m,i}^{(5)}$ is a nonzero component of $\Gamma_m(F_n(A))$. To see
this, use the same argument used in item a) of the above case and Lemma \ref{fm5}.

b) If $i \geq n+4$, then $(KS_m) \cdot u_{m,i}^{(5)}$ vanishes in $\Gamma_m(F_n(A))$.
To see this, use the same argument used in item b) of the above case

\end{enumerate}

Now let $I$ denote the T-ideal generated by
\[[x_1,x_2][x_3,x_4,x_5] \  \ \mbox{and} \  \ [x_1,x_2][x_{3},x_{4}]\cdots [x_{n+3},x_{n+4}].\]
We will find the decomposition of
\[\Gamma_m(I)=\frac{\Gamma_m}{\Gamma_m \cap I}\]
in irreducible $S_m$-modules. Since
\[T(A) \subseteq I \subseteq T(F_n(A))\]
we have two important remarks:
\begin{itemize}
 \item If some irreducible $S_m$-module is a nonzero component of $\Gamma_m(F_n(A))$, it is also a nonzero component of $\Gamma_m(I)$;

 \item If some irreducible $S_m$-module is a nonzero component of $\Gamma_m(I)$ and vanishes in $\Gamma_m(F_n(A))$, then it is some of the
$(KS_m) \cdot u_{m,i}^{(j)}$ analyzed in the four items b) above.
\end{itemize}

But in the items b) the main argument is that
\[f_{m,i}^{(j)} \in \langle [x_1,x_2]\ldots [x_{n+3},x_{n+4}] \rangle ^T,\]
for an appropriate $i$. In particular, $f_{m,i}^{(j)} \in I$ and then $(KS_m) \cdot u_{m,i}^{(j)}$ vanishes in $\Gamma_m(I)$. Hence
\[\Gamma_m(F_n(A))=\Gamma_m(I)\]
for any  $m$. As a consequence,
\[T(F_n(A))=I,\]
since the characteristic of $K$ is zero.

Let us now show that
\[T(F_n(A))=T(F_{n+1}(A)).\]
Since $F_n(A)$ is a subalgebra of $F_{n+1}(A)$, we have
\[T(A) \subseteq T(F_{n+1}(A))\subseteq T(F_n(A)).\]
From this we have two important remarks:
\begin{itemize}
 \item If some irreducible $S_m$-module is a nonzero component of $\Gamma_m(F_n(A))$, it is also a nonzero component of $\Gamma_m(F_{n+1}(A))$;

 \item If some irreducible $S_m$-module is a nonzero component of $\Gamma_m(F_{n+1}(A))$ and vanishes in $\Gamma_m(F_n(A))$, then it is some of the
$(KS_m) \cdot u_{m,i}^{(j)}$ analyzed in the four items b) above.
\end{itemize}

But in items b) the main argument is that
\[f_{m,i}^{(j)} \in \langle [x_1,x_2]\ldots [x_{n+3},x_{n+4}] \rangle ^T,\]
for an appropriate $i$. In particular, $f_{m,i}^{(j)} \in T(F_{n+1}(A))$ and then $(KS_m) \cdot u_{m,i}^{(j)}$ vanishes in $\Gamma_m(F_{n+1}(A))$. Hence
\[\Gamma_m(F_n(A))=\Gamma_m(F_{n+1}(A))\]
for any $m$. As a consequence we also conclude that
\[T(F_n(A))=T(F_{n+1}(A)),\]
since the characteristic of $K$ is zero.
\end{proof}

It is well-known from the theory (see \cite{BelovRowen}) that any
finitely generated algebra is PI-equivalent to some finite dimensional algebra. For each
$n\geq 2$ we exhibit a finite dimensional algebra PI-equivalent to $F_n(A)$.

Let $E^{(n)}$ denote the finite dimensional unitary Grassmann algebra finitely generated by $e_1,\ldots,e_n$. Denote by $E_0^{(n)}$ and $E_1^{(n)}$ the
even and odd component of $E^{(n)}$, respectively and define the finite dimensional algebra
\[A^{(n)}=
 \left(
 \begin{array}{cc}
 E_0^{(n)}&E^{(n)}\\
 0&E^{(n)}
 \end{array}
\right).
\]

\begin{teo}
If $n\geq 2$ is even, then $F_n(A)$ and $A^{(n)}$ satisfy the same polynomial identities.
\end{teo}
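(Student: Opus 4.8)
The plan is to prove the two inclusions $T(F_n(A))\subseteq T(A^{(n)})$ and $T(A^{(n)})\subseteq T(F_n(A))$ separately, invoking Theorem \ref{teoremaprincipal} to identify $T(F_n(A))$ with the T-ideal $I$ generated by $[x_1,x_2][x_3,x_4,x_5]$ and by $[x_1,x_2][x_3,x_4]\cdots[x_{n+3},x_{n+4}]$. Throughout I would use that $A^{(n)}$ is a subalgebra of $A$ (embed $E^{(n)}$ in $E$), so that $T(A)\subseteq T(A^{(n)})$; in particular every $\equiv^A$ congruence proved earlier remains valid modulo $T(A^{(n)})$.

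For the inclusion $I\subseteq T(A^{(n)})$, the first generator $[x_1,x_2][x_3,x_4,x_5]$ already belongs to $T(A)\subseteq T(A^{(n)})$, so the only point is to check that the product of $k=(n+4)/2$ commutators is an identity of $A^{(n)}$. Evaluating each commutator $[X_{2i-1},X_{2i}]$ on arbitrary elements of $A^{(n)}$ gives a matrix $\begin{pmatrix} 0 & \beta_i \\ 0 & \gamma_i \end{pmatrix}$: the $(1,1)$-entry is $0$ because $E_0^{(n)}$ is central, $\gamma_i\in[E^{(n)},E^{(n)}]$ is a scalar multiple of a product of two odd elements and hence a sum of monomials of length $\geq 2$, while $\beta_i\in E^{(n)}$ may have any degree, including $0$. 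Multiplying $k$ such matrices yields $\begin{pmatrix} 0 & \beta_1\gamma_2\cdots\gamma_k \\ 0 & \gamma_1\cdots\gamma_k \end{pmatrix}$. Here the $(2,2)$-entry is a product of $k$ factors of length $\geq 2$, hence of length $\geq 2k=n+4>n$, and the $(1,2)$-entry is a product of $k-1$ such factors, of length $\geq n+2>n$; both therefore vanish in $E^{(n)}$. This degree count is the computational core of the argument and explains why $n+4$ variables are required rather than $n+2$: the off-diagonal entry carries one commutator fewer than the diagonal one.

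For the reverse inclusion I would transfer the non-identity Lemmas \ref{fm1}--\ref{fm5} from $F_n(A)$ to $A^{(n)}$. The explicit substitutions in Lemmas \ref{fm1} and \ref{fm2} use only the matrix units $e_{ij}$ and the Grassmann generators $e_1,\dots,e_n$, and their nonzero outputs (for instance $2^{n/2}e_1\cdots e_n e_{12}$) stay nonzero in $A^{(n)}$ precisely because $e_1\cdots e_n\neq 0$ in $E^{(n)}$; thus these polynomials are genuinely not identities of $A^{(n)}$. The remaining Lemmas \ref{lemfmn3}, \ref{fm4} and \ref{fm5} are proved by reductions that use only congruences modulo $T(A)$ together with the closure of a T-ideal under substitution and multiplication; since $T(A)\subseteq T(A^{(n)})$, each such reduction carries over verbatim with $A^{(n)}$ in place of $F_n(A)$, and the resulting contradictions now rest on the already-established base cases for $A^{(n)}$. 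Hence each $f_{m,i}^{(j)}$ that survives in the decomposition of $\Gamma_m(F_n(A))$ fails to be an identity of $A^{(n)}$.

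Finally I would assemble these facts exactly as in the proof of Theorem \ref{teoremaprincipal}. Because $I=T(F_n(A))\subseteq T(A^{(n)})$, the module $\Gamma_m(A^{(n)})$ is a homomorphic image of $\Gamma_m(F_n(A))$; by the previous paragraph none of the irreducible components listed in that decomposition is killed, so the quotient map is an isomorphism and $\Gamma_m(A^{(n)})=\Gamma_m(F_n(A))$ for every $m$. Since the characteristic of $K$ is zero and both $A^{(n)}$ and $F_n(A)$ are unitary, their identities are determined by the proper multilinear ones, whence $T(A^{(n)})=T(F_n(A))$. I expect the main obstacle to be the degree count of the second paragraph—establishing that both entries of the commutator product vanish and that the bound $n+4$ is the correct one—whereas the transfer of the non-identity lemmas is routine once one observes that all the witnessing substitutions already live inside the subalgebra $A^{(n)}$.
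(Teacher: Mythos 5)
Your proof is correct, and its overall architecture is the same as the paper's: identify $T(F_n(A))$ with the T-ideal $I$ via Theorem \ref{teoremaprincipal}, prove $I\subseteq T(A^{(n)})$, and obtain the reverse inclusion by transferring Lemmas \ref{fm1}, \ref{fm2}, \ref{lemfmn3}, \ref{fm4} and \ref{fm5} to $A^{(n)}$ (the explicit witnesses already lie in $A^{(n)}$ and stay nonzero because $e_1\cdots e_n\neq 0$ in $E^{(n)}$, and the remaining reductions only use congruences modulo $T(A)\subseteq T(A^{(n)})$ together with T-ideal closure), so that no irreducible component of $\Gamma_m(F_n(A))$ vanishes in $\Gamma_m(A^{(n)})$; this is exactly the paper's argument. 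The one genuine difference is how you check that $[x_1,x_2][x_3,x_4]\cdots[x_{n+3},x_{n+4}]$ is an identity of $A^{(n)}$. The paper uses multilinearity to reduce to substitutions of the form $ae_{ij}$ with $a\in E^{(n)}$ and runs a five-case analysis on the matrix units, with each nontrivial case ending in an appeal to the known fact that $[x_3,x_4]\cdots[x_{n+3},x_{n+4}]$, a product of $(n+2)/2$ commutators, lies in $T(E^{(n)})$. You instead evaluate on arbitrary elements of $A^{(n)}$ all at once: every commutator has zero $(1,1)$-entry and $(2,2)$-entry equal to a commutator in $E^{(n)}$, hence a sum of monomials of Grassmann length at least $2$, so the product of $(n+4)/2$ commutators has $(2,2)$-entry of length at least $n+4$ and $(1,2)$-entry of length at least $n+2$, both exceeding $n$ and therefore zero in $E^{(n)}$. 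Your computation is more self-contained, since it reproves the needed identity of $E^{(n)}$ by the degree count rather than citing the generation of $T(E^{(n)})$, and it handles all substitutions uniformly; the paper's case analysis, in exchange, makes the role of the structure of $T(E^{(n)})$ explicit. Both arguments are complete and correct.
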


\begin{proof}
Since $A^{(n)}$ is a subalgebra of $A$,
\[[x_1,x_2][x_3,x_4,x_5]\]
is a polynomial identity of $A^{(n)}$. We now show that
\[f=[x_1,x_2][x_3,x_4]\ldots [x_{n+3},x_{n+4}]\]
is also a polynomial identity for $A^{(n)}$: Since $f$ is multilinear, it is enough to substitute variables by elements of the form $ae_{ij} \in A^{(n)}$,
for suitable $a\in E^{(n)}$. Consider the following substitution
\[\Delta =
[a_1e_{i_1j_1},a_2e_{i_2j_2}][a_3e_{i_3j_3},a_4e_{i_4j_4}]\ldots [a_{n+3}e_{i_{n+3}j_{n+3}},a_{n+4}e_{i_{n+4}j_{n+4}}].\]
We have a few cases to analyze:

\

Case 1) $i_1=j_1=i_2=j_2=1$.

In this case we have $\Delta=0$, since $a_1,a_2 \in E_0^{(n)}$.

\

Case 2) $i_1=j_1=1$, $i_2=1$
and $j_2=2$.

In this case, if $\Delta \neq 0$, the only possibility is
\[i_3=j_3=i_4=j_4=\ldots=
i_{n+3}=j_{n+3}=i_{n+4}=j_{n+4}=2.\] Hence,
\[\Delta =
a_1a_2[a_3,a_4]\ldots [a_{n+3},a_{n+4}]e_{12}.\] Since $a_3,a_4,\ldots,a_{n+3},a_{n+4} \in E^{(n)}$ and
\[[x_3,x_4]\ldots [x_{n+3},x_{n+4}] \in T(E^{(n)}),\]
it follows that $\Delta=0$.

\

Case 3) $i_1=1$, $j_1=2$, $i_2=1$ and $j_2=2$.

In this case we have $[a_1e_{12},a_2e_{12}]=0$ and then $\Delta=0$.

\

Case 4) $i_1=1$, $j_1=2$, $i_2=j_2=2$.

The same argument of Case 2 shows that $\Delta=0$.

\

Case 5) $i_1=j_1=i_2=j_2=2$.

In this case, if $\Delta \neq 0$, the only possibility is
\[i_3=j_3=i_4=j_4=\ldots=
i_{n+3}=j_{n+3}=i_{n+4}=j_{n+4}=2.\] Hence,
\[\Delta =
[a_1,a_2][a_3,a_4]\ldots [a_{n+3},a_{n+4}]e_{22}.\] Since
\[[x_3,x_4]\ldots [x_{n+3},x_{n+4}] \in T(E^{(n)}),\]
it follows that $\Delta=0$.

\

We just proved that \[f=[x_1,x_2][x_3,x_4]\ldots [x_{n+3},x_{n+4}]\] is a polynomial identity for $A^{(n)}$ and $T(F_n(A))\subseteq T(A^{(n)})$.

Since $T(F_n(A))\subseteq T(A^{(n)})$, the $S_m$-module $\Gamma_m(A^{(n)})$ is a homomorphic image of $\Gamma_m(F_n(A))$. In order to show that $A^{(n)}$
satisfies the same identities of $F_n(A)$, we need to show that it satisfies no other identities. For that matter it is enough to show that the generators
of the irreducible $S_m$-modules in the decomposition of $\Gamma_m(F_n(A))$ are not identities of $A^{(n)}$, i.e., that the polynomials in items a) in the
four cases in the proof of Theorem \ref{seqcarprdea} and the polynomial $u_m^{(1)}$ are not identities for $A^{(n)}$. In order to do this it is enough to
observe that Lemmas \ref{fm1}, \ref{fm2}, \ref{lemfmn3}, \ref{fm4} and \ref{fm5} also hold for $A^{(n)}$ instead of $F_n(A)$, that the polynomials
$f_{m,i}^{(j)}$ are equivalent to $u_{m,i}^{(j)}$ and apply the same arguments used in items a) in the proof of Theorem \ref{teoremaprincipal}.
\end{proof}

In \cite{KdM} the second author and Koshlulov describe a finite basis for $F_2(M_{1,1})$, the relatively free algebra of rank 2 of the minimal algebra $M_{1,1}$. They also exhibit a finite dimensional algebra PI-equivalent to $F_2(M_{1,1})$. Using the same construction of the above theorem, define for each $k$,
                                                \[M_{1,1}^{(k)}=\left(
                                                    \begin{array}{cc}
                                                      E^{(k)}_0 & E^{(k)}_1 \\
                                                      E^{(k)}_1 & E^{(k)}_0 \\
                                                    \end{array}
                                                \right)
\]
Then using the decomposition of the $S_m$-module $\Gamma_m(F_2(M_{1,1}))$ as a sum of irreducible modules, given in $\cite{KdM}$,
one can verify the following:

\begin{teo}
The algebras $F_2(M_{1,1})$ and $M_{1,1}^{(2)}$ are PI-equivalent.
\end{teo}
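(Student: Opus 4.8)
The plan is to mirror exactly the strategy used in the proof of the previous theorem (the PI-equivalence of $F_n(A)$ and $A^{(n)}$), transporting it to the $M_{1,1}$ setting. The final statement asserts that $F_2(M_{1,1})$ and $M_{1,1}^{(2)}$ are PI-equivalent, so the goal is the pair of inclusions $T(F_2(M_{1,1})) \subseteq T(M_{1,1}^{(2)})$ and $T(F_2(M_{1,1})) \supseteq T(M_{1,1}^{(2)})$. The essential external input, which the paragraph preceding the statement grants us, is the explicit decomposition of the $S_m$-module $\Gamma_m(F_2(M_{1,1}))$ into irreducibles from \cite{KdM}, together with a known generating set for $T(F_2(M_{1,1}))$ from that same paper. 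I would invoke these as given rather than rederive them.

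First I would establish the easy inclusion $T(F_2(M_{1,1})) \subseteq T(M_{1,1}^{(2)})$. Since $M_{1,1}^{(2)}$ is built from the finite-dimensional Grassmann pieces $E^{(2)}_0, E^{(2)}_1$ and $M_{1,1}^{(2)}$ is a subalgebra of $M_{1,1} = M_{1,1}(E)$, the defining identities of $M_{1,1}$ automatically hold in $M_{1,1}^{(2)}$. For any additional generator of $T(F_2(M_{1,1}))$ coming from \cite{KdM} that reflects the finite rank $2$ (an analogue of the product-of-commutators identity $[x_1,x_2]\cdots[x_{n+3},x_{n+4}]$ appearing in Theorem~\ref{teoremaprincipal}), I would verify it vanishes on $M_{1,1}^{(2)}$ by the same multilinear substitution-by-matrix-units argument used in the previous theorem: substitute $a_s e_{i_s j_s}$ with $a_s \in E^{(2)}$, split into cases by the index patterns, and in the surviving cases reduce a trailing product of commutators to an identity of $E^{(2)}$ via $T(E^{(2)}) = T(E^{(2k)})$ with $k=1$, forcing the value to zero. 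This step is entirely routine and structurally identical to Cases 1--5 above.

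For the reverse inclusion $T(M_{1,1}^{(2)}) \subseteq T(F_2(M_{1,1}))$, I would argue at the level of proper multilinear polynomials and cocharacters. Because $T(F_2(M_{1,1})) \subseteq T(M_{1,1}^{(2)})$, the module $\Gamma_m(M_{1,1}^{(2)})$ is a homomorphic image of $\Gamma_m(F_2(M_{1,1}))$. It therefore suffices to show that $\Gamma_m(M_{1,1}^{(2)})$ is \emph{not} a proper quotient, i.e., that every generator of every irreducible component surviving in the known decomposition of $\Gamma_m(F_2(M_{1,1}))$ is a nonidentity of $M_{1,1}^{(2)}$. Concretely, I would take each generating polynomial listed in the \cite{KdM} decomposition and exhibit an explicit evaluation on $M_{1,1}^{(2)}$ (using elements of the form $(1+e_1)e_{ij}$, $e_2 e_{ij}$, etc., as in Lemmas~\ref{fm1}--\ref{fm5}) giving a nonzero value. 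Since every such generator is realized inside $M_{1,1}^{(2)}$ with only the two Grassmann generators $e_1, e_2$, no component collapses, so the quotient map on each $\Gamma_m$ is an isomorphism. As the characteristic is zero, equality of all proper cocharacters forces $T(M_{1,1}^{(2)}) = T(F_2(M_{1,1}))$.

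The main obstacle is verifying the nonvanishing of the surviving generators on the small algebra $M_{1,1}^{(2)}$: unlike $A^{(2)}$, the algebra $M_{1,1}^{(2)}$ has all four off-diagonal blocks populated by $E^{(2)}_1$, and $E^{(2)}$ is only $4$-dimensional, so the available Grassmann generators are severely limited to $e_1, e_2$. One must check that the explicit witnesses for the generators in the $\Gamma_m(F_2(M_{1,1}))$ decomposition do not accidentally evaluate to zero precisely because the rank of $E^{(2)}$ is capped at two. I expect this to reduce, exactly as in the previous theorem, to the observation that the generating polynomials have bounded ``commutator complexity'' matching the rank $2$, so that the two generators $e_1, e_2$ suffice; but confirming this requires reading off the precise list of surviving generators from \cite{KdM} and pairing each with a concrete nonzero substitution, which is the computational heart of the verification.
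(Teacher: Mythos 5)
Your proposal takes essentially the same route as the paper: the paper gives no detailed argument for this theorem, asserting only that it follows ``using the decomposition of the $S_m$-module $\Gamma_m(F_2(M_{1,1}))$ as a sum of irreducible modules, given in \cite{KdM}'' together with the same construction and arguments as the preceding theorem on $F_n(A)$ and $A^{(n)}$ --- which is precisely your two-inclusion plan (multilinear matrix-unit substitutions for $T(F_2(M_{1,1}))\subseteq T(M_{1,1}^{(2)})$, and nonvanishing of the surviving irreducible-component generators for the reverse). The case-by-case substitution checks that you flag as the remaining ``computational heart'' are exactly what the paper likewise leaves to the reader as routine verification.
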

It is interesting to investigate if the above result holds for arbitrary $k$.

\begin{prob}
Are the algebras $F_k(M_{1,1})$ and $M_{1,1}^{(k)}$ PI-equivalent, for $k>2$?
\end{prob}

\section{Subvarieties and asymptotic equivalence}

The notion of asymptotic equivalence of varieties (or T-ideals) has been introduced by Kemer in \cite{Kemer90}. If $\mathfrak{U}$ and $\mathfrak{V}$ are
varieties, they are
asymptotically equivalent if $T(\mathfrak{U})$ and
$T(\mathfrak{V})$ satisfy the same proper polynomials from a certain degree on.

In the description of the subvarieties of $\mathfrak{M}_5$ in \cite{DDN}, the authors show that any subvariety of $\mathfrak{M}_5$ is asymptotically equivalent
to the variety generated by one among the following algebras: $K$, $E$, $R_p$ or $R_p\oplus  E$, for a suitable $p$. Here $R_p$ stands for the algebra
\[R_p:=\left\{\left.\left(
                      \begin{array}{cc}
                        \alpha_{11} & \alpha_{12} \\
                        0           & \alpha_{22} \\
                      \end{array}
                    \right)\right| \begin{array}{cc}
                              \alpha_{11}\in E_0, & \alpha_{12}\in \mathcal{C}_pE_0+\mathcal{C}_pE_1 \\
                               & \alpha_{22}\in \mathcal{C}_pE_0+t\mathcal{C}_pE_1
                            \end{array}
 \right\}\]
and $\mathcal{C}_p:=\dfrac{K[t]}{(t^p)}$.

We remark that for each $p\geq 2$,
the polynomial identities of $R_{2p}$ coincide with the polynomial identities of $F_{2p-2}(A)$. In particular, since $R_2$ is PI-equivalent to $UT_2(K)$, the $2\times 2$ upper triangular matrix algebra, we can restate the result of \cite{DDN} as

\begin{coro}
Any subvariety of $\mathfrak{M}_5$ is asymptotically equivalent to the variety generated by one among the following algebras: $UT_2(K)$, $E$, $F_m(A)$ or
$F_m(A)\oplus E$, for a suitable $m$.
\end{coro}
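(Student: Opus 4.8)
The plan is to read the statement as a translation of the classification obtained by Di Vincenzo, Drensky and Nardozza in \cite{DDN}, transported along the PI-equivalences recorded in the remark preceding the statement. The starting point is their result that every subvariety of $\mathfrak{M}_5$ is asymptotically equivalent to $var(B)$ for some $B$ in the list $K$, $E$, $R_p$, $R_p\oplus E$. The first thing I would record is that asymptotic equivalence is coarser than PI-equivalence: if $T(B)=T(B')$ then $\Gamma_n\cap T(B)=\Gamma_n\cap T(B')$ for every $n$, so the two varieties have the same proper identities in all degrees and are in particular asymptotically equivalent. Since asymptotic equivalence is an equivalence relation, it follows that in the DDN list one may replace any generating algebra by a PI-equivalent one without changing the classification.

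The substitutions themselves are dictated by the remark. For the distinguished value one uses $T(R_2)=T(UT_2(K))$, replacing $R_2$ by $UT_2(K)$; for $p\geq 2$ one uses $T(R_{2p})=T(F_{2p-2}(A))$, and as $p$ ranges over $\{2,3,\dots\}$ the rank $2p-2$ ranges over all even integers $\geq 2$, which are exactly the admissible ranks in Theorem \ref{teoremaprincipal}. This turns every even-subscript $R_p$ into either $UT_2(K)$ or some $F_m(A)$ with $m$ even. For the direct-sum entries I would use $T(R_p\oplus E)=T(R_p)\cap T(E)$, so that a PI-equivalence $T(R_p)=T(B')$ lifts to $T(R_p\oplus E)=T(B'\oplus E)$; thus $R_{2p}\oplus E$ is replaced by $F_{2p-2}(A)\oplus E$, matching the term $F_m(A)\oplus E$. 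The algebra $E$ is carried over unchanged.

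What remains is to show that this replacement is exhaustive, and this is where I expect the real work to lie. First, the odd subscripts $R_p$ are not covered directly by the remark, so I would supply the parity collapse $T(R_{2p})=T(R_{2p+1})$, the exact analogue of the equalities $T(E^{(2k)})=T(E^{(2k+1)})$ and $T(F_n(A))=T(F_{n+1}(A))$ already used in the paper; this reduces every odd-subscript $R_p$ (and $R_p\oplus E$) to an even one, in particular $R_3\sim R_2\sim UT_2(K)$. Second, the generator $K$ and, in the small case $p=2$, the direct sum $R_2\oplus E\sim UT_2(K)\oplus E$ must be reconciled with the target list; here I would either verify the corresponding asymptotic equivalences among the small members of $\mathfrak{M}_5$ or treat the commutative variety $var(K)$ as the degenerate base case. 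Assembling the pieces rewrites the DDN list $K,E,R_p,R_p\oplus E$ as $UT_2(K),E,F_m(A),F_m(A)\oplus E$, which is the assertion.

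The main obstacle is precisely this completeness bookkeeping rather than the core equivalences, which I am taking from the remark: one must pin down the exact range of the parameter $p$ occurring in \cite{DDN}, establish the parity identity $T(R_{2p})=T(R_{2p+1})$ so that odd subscripts do not produce new classes, and settle the degenerate small cases (the commutative variety $var(K)$ and the sum $UT_2(K)\oplus E$) so that the four families $UT_2(K)$, $E$, $F_m(A)$, $F_m(A)\oplus E$ genuinely exhaust all asymptotic classes of subvarieties of $\mathfrak{M}_5$.
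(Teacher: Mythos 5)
Your overall strategy coincides with the paper's: the corollary is obtained there purely by substituting, in the DDN list, PI-equivalent algebras according to the preceding remark ($R_2\mapsto UT_2(K)$, $R_{2p}\mapsto F_{2p-2}(A)$ for $p\geq 2$), using exactly the observations you make (equal T-ideals give equal proper identities in every degree, hence asymptotic equivalence, and $T(B\oplus E)=T(B)\cap T(E)$ transports equivalences to the summands). The problem lies in the extra ``completeness'' step that you yourself single out as the real work: the parity collapse $T(R_{2p})=T(R_{2p+1})$, and in particular the claim $R_3\sim R_2\sim UT_2(K)$, is false --- and false in the direction your reduction needs. In $R_q$ every commutator has $(1,1)$-entry $0$ (since $E_0$ is commutative) and $(2,2)$-entry in $t^2\mathcal{C}_qE_0$ (since $E_0$ is central in $E$), so a product of $k$ commutators has $(1,2)$-entry in $t^{2(k-1)}\mathcal{C}_qE$ and $(2,2)$-entry in $t^{2k}\mathcal{C}_qE_0$; consequently the product of $k$ disjoint commutators is an identity of $R_q$ if and only if $2(k-1)\geq q$. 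For $q=2p$ this holds at $k=p+1$, but for $q=2p+1$ it fails: the evaluation
\[
[e_{11},e_1e_{12}]\,[te_2e_{22},te_3e_{22}]\cdots[te_{2p}e_{22},te_{2p+1}e_{22}]
=2^p\,t^{2p}\,e_1e_2\cdots e_{2p+1}\,e_{12}
\]
is nonzero in $R_{2p+1}$ because $t^{2p}\neq 0$ in $\mathcal{C}_{2p+1}$. Thus $[x_1,x_2]\cdots[x_{2p+1},x_{2p+2}]$ separates $R_{2p}$ from $R_{2p+1}$; already $[x_1,x_2][x_3,x_4]$ is an identity of $UT_2(K)$ but not of $R_3$. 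Moreover this is not a low-degree accident: the polynomials $f_{m,2p}^{(2)}$ lie in $T(R_{2p})\setminus T(R_{2p+1})$ for every large $m$, so $R_{2p}$ and $R_{2p+1}$ are not even asymptotically equivalent. The collapse that does hold goes upward, $T(R_{2p+1})=T(R_{2p+2})$, exactly as the thresholds above indicate (and in analogy with $T(E^{(2k)})=T(E^{(2k+1)})$ read correctly): an odd-index $R_{2p+1}$ must be absorbed into $F_{2p}(A)$, not into $F_{2p-2}(A)$, and $R_3$ into $F_2(A)$, not into $UT_2(K)$.

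With that correction your bookkeeping for the $R_p$ family still closes (odd indices land in the $F_m(A)$ family, just one step higher), so the error is repairable rather than fatal to the strategy. The remaining degenerate cases you flag --- the commutative generator $K$ and the summand $R_2\oplus E\sim UT_2(K)\oplus E$ --- are left genuinely unresolved in your write-up (``either \dots or \dots'' is not an argument), so as written that portion is not a proof; for comparison, the paper does not address these cases at all, treating the corollary strictly as a restatement of the classification of \cite{DDN} under the two substitutions of the remark, so any subtlety there is inherited from how the parameter range of $p$ in \cite{DDN} is read, not created by the substitution itself.
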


In order to compare the above with the description of subvarieties of other varieties of algebras, we recall that if $R \in var(E)$ ($R$ not necessarily a
unitary algebra). Then $R$ is PI-equivalent to one of the following algebras: $E$, $N$, $C\oplus N$, $E^{(2m)}$, or $E^{(2m)}\oplus N$ , for some $m \geq
1$,  where $N$ is a nilpotent and $C$ is a commutative algebra. Such description can be found in \cite{LaMattina} (see also its review Zbl 1119.16022 in
Zentralblatt f\"ur Mathematik, https://zbmath.org/), and also in bulgarian in the paper \cite{DrVl}.

Let $F_m(E)$ denote the relatively free algebra of rank $m$ of the variety generated by $E$. It is easy to see that for each $m$, $F_{m}(E)$ and $E^{(m)}$
are PI-equivalent. In particular, since we are working with algebras with unit, we have the following description of subvarieties of $var(E)$.

\begin{prop}
Let $R \in var(E)$. Then $R$ is PI-equivalent to one among the following algebras: $K$, $E$, or $F_{2m}(E)$ , for some $m \geq 1$.
\end{prop}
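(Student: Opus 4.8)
The plan is to reduce to the classification of $var(E)$ obtained in \cite{LaMattina} and then exploit the hypothesis that $R$ is unitary to eliminate the cases carrying a nilpotent summand. By that classification, a nonzero algebra $R\in var(E)$ is PI-equivalent to one of $E$, $N$, $C\oplus N$, $E^{(2m)}$ or $E^{(2m)}\oplus N$, where $N$ is nilpotent and $C$ is commutative. The cases $R\sim E$ and $R\sim E^{(2m)}$ give the conclusion at once, the latter because $F_{2m}(E)$ and $E^{(2m)}$ are PI-equivalent. Since $R$ is unitary and nonzero it contains the subalgebra $K\cdot 1\cong K$, so $T(R)\subseteq T(K)$; I would use this inclusion together with the substitutions $x_i\mapsto 1$ throughout. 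Recall also that $T(K)=\langle [x_1,x_2]\rangle^T$, and that for a direct sum one has $T(A\oplus B)=T(A)\cap T(B)$.

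First I would dispose of the purely nilpotent case: if $R\sim N$ with $N$ nilpotent of index $d$, then $x_1\cdots x_d\in T(N)=T(R)$, and evaluating every variable at $1$ forces $1=0$, contradicting $R\neq 0$. For $R\sim C\oplus N$ I would use $T(R)=T(C)\cap T(N)$. As $C$ is commutative, $[x_1,x_2]\in T(C)$, hence $[x_1,x_2]x_3\cdots x_d\in T(C)$; since this polynomial is a combination of monomials of degree $d$ (the nilpotency index of $N$) it also lies in $T(N)$, and therefore in $T(R)$. Substituting $x_3=\cdots=x_d=1$ then forces $[x_1,x_2]\in T(R)$, so $R$ is commutative. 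Combined with $T(R)\subseteq T(K)=\langle [x_1,x_2]\rangle^T\subseteq T(R)$, this yields $T(R)=T(K)$.

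The analogous but slightly more delicate argument handles $R\sim E^{(2m)}\oplus N$. Writing $c_{m+1}=[x_1,x_2][x_3,x_4]\cdots[x_{2m+1},x_{2m+2}]$, I note that $c_{m+1}\in T(E^{(2m)})$ by the generation theorem for $T(E^{(2m)})$. Padding $c_{m+1}$ with extra variables so as to reach degree at least $d$ (the nilpotency index of $N$) produces a polynomial lying in both $T(E^{(2m)})$ and $T(N)$, hence in $T(R)$; evaluating the padding variables at $1$ recovers $c_{m+1}\in T(R)$. Since $R\in var(E)$ also satisfies $[x_1,x_2,x_3]$, the same theorem gives $T(E^{(2m)})=\langle [x_1,x_2,x_3],\,c_{m+1}\rangle^T\subseteq T(R)$, while conversely $T(R)=T(E^{(2m)})\cap T(N)\subseteq T(E^{(2m)})$. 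Hence $T(R)=T(E^{(2m)})=T(F_{2m}(E))$, and collecting all cases proves the proposition.

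I expect the main obstacle to be the bookkeeping in the two nilpotent-summand cases: one must pad the commutator polynomials with exactly enough variables to become homogeneous of degree at least the nilpotency index of $N$ (so as to land in $T(N)$) while staying inside $T(C)$, respectively $T(E^{(2m)})$, and then check that collapsing the padding variables to $1$ is a legitimate substitution recovering precisely $[x_1,x_2]$, respectively $c_{m+1}$. Everything else is a direct application of the cited classification and of the generation theorem for $T(E^{(2m)})$.
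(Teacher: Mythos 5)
Your proposal is correct and follows essentially the same route as the paper: invoke La Mattina's classification of (not necessarily unitary) algebras in $var(E)$, use the PI-equivalence of $F_{2m}(E)$ and $E^{(2m)}$, and use unitarity to rule out the cases involving a nilpotent summand. The paper states this reduction as an immediate remark without detail, so your padding-with-ones arguments for the $N$, $C\oplus N$ and $E^{(2m)}\oplus N$ cases simply make explicit the step the paper leaves to the reader, and they are sound.
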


By the above corollaries one expects the relatively free algebras of finite rank to be important in the description of subvarieties of varieties of
associative unitary algebras. This will be investigated in future projects.

\newpage

\section*{Acknowledgments}

The authors would like to thank the anonymous referee for his/her useful suggestions, that much improved the final version of this paper.

\end{document}